\documentclass[onefignum,onetabnum]{siamart190516}



\usepackage{amsfonts}
\usepackage{graphicx}
\usepackage{epstopdf}
\usepackage{algorithmic}
\usepackage{enumitem}
\ifpdf
  \DeclareGraphicsExtensions{.eps,.pdf,.png,.jpg}
\else
  \DeclareGraphicsExtensions{.eps}
\fi

\theoremstyle{plain}
\newtheorem{example}[theorem]{Example}
%
%

\providecommand{\abs}[1]{\lvert#1\rvert}
\providecommand{\norm}[1]{\lVert#1\rVert}

\newsiamremark{remark}{Remark}

\headers{Matrix H\"older's inequality and optimal transport}{K. J. Ciosmak}

\title{Matrix H\"older's inequality and divergence formulation of optimal transport of vector measures\thanks{The paper is based on the doctoral thesis \cite{Ciosmak4} of the author.
\funding{The financial support of St John's College in Oxford, the Clarendon Fund and EPSRC is gratefully acknowledged.  Part of this research was completed in Fall 2017 while the author was member of the Geometric Functional Analysis and Application program at MSRI, supported by the National Science Foundation under Grant No. 1440140. This research was also partly supported by the ERC Starting Grant 802689 CURVATURE.}}
}
\author{Krzysztof J. Ciosmak\thanks{University of Oxford, Mathematical Institute 
  (\email{ciosmak@maths.ox.ac.uk}) and 
University of Oxford, St John's College (\email{krzysztof.ciosmak@sjc.ox.ac.uk}).}
}
\usepackage{amsopn}


\ifpdf
\hypersetup{
  pdftitle={Matrix H\"older's inequality and divergence formulation of optimal transport of vector measures},
  pdfauthor={K.J. Ciosmak}
}
\fi




\begin{document}

\maketitle

\begin{abstract}
  We characterise equality cases in matrix H\"older's inequality and develop a divergence formulation of optimal transport of vector measures. As an application, we reprove the representation formula for measures in the polar cone to monotone maps. We generalise the last result to a wide class of polar cones, including polar cones to tangent cones to the unit ball in the space of differentiable functions and in the Sobolev spaces.
\end{abstract}

\begin{keywords}
 optimal transport, vector measure, matrix H\"older's inequality, duality, divergence-measure field, polar cone
\end{keywords}

\begin{AMS}
  49K20, 49N15, 15A42, 15A45, 15A60, 28C05, 26B30, 26B40, 35D30, 49Q20
\end{AMS}

\section{Introduction}

\subsection{Matrix H\"older's inequality}

We shall present matrix H\"older's inequality with a proof taken from \cite{Baumgartner}. We characterise the equality cases for this inequality. According to the knowledge of the author, this characterisation is new.

Matrix H\"older's inequality is an extension of the classical H\"older's inequality to matrices. It states that for given two real matrices $A,B$ of size $m\times n$, there is
\begin{equation*}
\mathrm{tr}(A^*B)\leq \norm{A}_p \norm{B}_q,
\end{equation*}
where $p,q\in [1,\infty]$ are such that $\frac1p+\frac1q=1$. Here, for $p\in [1,\infty]$, $\norm{A}_p$ denotes the Schatten $p$-norm, defined as
\begin{equation*}
\norm{A}_p=\Big(\sum_{i=1}^{n\wedge m}a_i^p\Big)^{\frac1p},
\end{equation*}
where $(a_i)_{i=1}^{n\wedge m}$ denote the sequence of singular values of $A$. Equivalently,
\begin{equation*}
\norm{A}_p=(\mathrm{tr}|A|^p)^{\frac1p},
\end{equation*}
where $|A|=(A^*A)^{\frac12}$.

\subsection{Optimal transport of vector measures}

We provide a divergence formulation of optimal transport of vector measures. Given a vector-valued measure $\mu\in\mathcal{M}(\mathbb{R}^n,\mathbb{R}^m)$, such that $\mu(\mathbb{R}^n)=0$, we consider a variational problem of finding the infimum
\begin{equation}\label{eqn:measure}
\inf\Big\{\norm{M}_{\mathcal{M}}\mid M\in\mathcal{M}(\mathbb{R}^n,\mathbb{R}^{m\times n}), -div M=\mu\Big\}.
\end{equation}
We show that this value is equal to 
\begin{equation}\label{eqn:lipschitz}
\sup\Big\{\int \langle f,d\mu\rangle\mid f\colon \mathbb{R}^n\to\mathbb{R}^m, \norm{f}_{\mathcal{C}^1}\leq 1\Big\}.
\end{equation}
Here $M$ is a matrix-valued measure and $\norm{M}_{\mathcal{M}}$ stands for the total variation norm of $M$ with respect to the Schatten $1$-norm.  This constitutes an extension of the divergence formulation of optimal transport to the setting of vector measures; see \cite[1.2.3]{Villani2} for the case of signed measures. 

Problem of minimising the total variation of a signed measure with prescribed divergence is equivalent to the Monge--Kantorovich problem \cite{Kantorovich, Monge}.
Such divergence formulation of the optimal transport problem, or the flow-minimisation problem, has been first investigated by Beckmann \cite{Beckmann2}. We refer the reader to the book of Santambrogio \cite[Chapter 4]{Santambrogio} for a thorough discussion of the problem.

We develop an analogous theory for absolutely continuous vector measures. In there (\ref{eqn:lipschitz}) is replaced by an optimisation problem concerning the unit ball of the Sobolev space, whereas (\ref{eqn:measure}) is replaced by a minimisation over absolutely continuous matrix-valued measures. We develop a duality theory and provide, using the characterisation of equality cases in matrix H\"older's inequality, necessary and sufficient conditions for a matrix-valued absolutely continuous measure and a map in the Sobolev space to be extremisers in the corresponding problems.

Let us mention another formulation of optimal transport of vector measures; see \cite{Ciosmak2}. The advantage of the current formulation to the one in \cite{Ciosmak2} is that the primal problem's value (\ref{eqn:measure}) is always attained.

For yet another formulation of optimal transport problem of vector measures, developed by Chen, Georgiou, Tannenbaum, Haber, Yamamoto, Ryu, Li and Osher, see \cite{Chen1,Chen3,Chen2}.

\subsection{Michell trusses}

Let us note that our results are related to the works  in elasticity of Bouchitt\'e, Gangbo and Seppecher \cite{Gangbo1}, Gangbo \cite{Gangbo2}, see also \cite{Michell} for the original paper of Michell. 

In there the Michell trusses are investigated.
Suppose that we are given a measure, which represents the distribution of forces. We would like to design an optimal structure of finitely many bars, i.e. a truss, that can support such load. This condition may be restated by saying that the divergence of the stress of the truss
has to be equal to the distribution of forces.
The optimality condition is that the total stress is minimal.
 This problem is known to have no solution in general, as an sequence of  trusses approximating the optimal value of total stress, may lead to a diffuse structure. Appropriate relaxation leads to consideration of an optimisation problem of the form similar to (\ref{eqn:measure}).
 Here the assumption of the distribution of forces is that the total force and the total torque vanishes, which differs from the setting of our paper.
 
 \subsection{Representation formulae for polar cones}
 
As an application of our developments, we provide several representation theorems of polar cones. A polar cone to a subset $A$ of a Banach space is defined as a set of all linear, continuous functionals on the Banach space which are non-positive on the set $A$. A tangent cone to a convex set $A$ at a point $x\in A$, is defined as the closure of the set of all points of the form $\lambda(a-x)$ for $\lambda\geq 0$ and $a\in A$. Therefore, a polar cone to a tangent cone to $A$ at $x\in A$ is the set of linear, continuous functionals that attain their maximum on $A$ at the point $x$.

As an outcome of our study we reprove the result of Cavalletti and Westdickenberg from \cite[Theorem 2.1]{Cavalletti4}.  This is to say, we provide a novel proof of the representation formula for vector measures that belong to the polar cone to the set of monotone maps. Let us recall that a map $u\colon\mathbb{R}^n\to\mathbb{R}^n$ is monotone whenever for all $x,y\in\mathbb{R}^n$ there is
\begin{equation*}
\langle u(x)-u(y),x-y\rangle\geq 0.
\end{equation*}
In essence, both proofs rely on the Hahn--Banach theorem. Yet, in our proof, we apply it directly, to obtain the duality results -- Theorem \ref{thm:duality3} and Theorem \ref{thm:dualityLebesgue}. In \cite{Cavalletti4}, Riesz' extension theorem is employed, which provides extensions of positive functionals. The advantage of our proof is that the representation of elements of polar cone needs not to be known a priori, unlike in the proof of \cite{Cavalletti4}. This allows for application of our methods to computation of representation formulae for elements of other polar cones to tangent cones of the unit balls. In particular, we provide such formulae for the case of the space of differentiable maps and for the case of the Sobolev spaces for exponents in the reflexive range.

In \cite{Cavalletti4} vector-valued measures $\mu\in\mathcal{M}(\mathbb{R}^n,\mathbb{R}^n)$ and matrix-valued measures $H\in\mathcal{M}(\mathbb{R}^n,\mathbb{R}^{n\times n})$ such that for all monotone $u\in\mathcal{C}^1(\mathbb{R}^n,\mathbb{R}^n)$ there is
\begin{equation}\label{eqn:polari}
-\int_{\mathbb{R}^n}\langle u,d\mu\rangle-\int_{\mathbb{R}^n}\langle e(u),dH\rangle\geq 0,
\end{equation}
are considered. Here $e(u)=\frac12( Du+Du^*)$ is called the deformation tensor of $u$. It is shown that then exists a measure $M$, with values in symmetric and positive semidefinite matrices, such that for all smooth and compactly supported $u$ there is
\begin{equation}\label{eqn:def}
-\int_{\mathbb{R}^n}\langle u,d\mu\rangle-\int_{\mathbb{R}^n}\langle e(u),dH\rangle=\int_{\mathbb{R}^n}\langle e(u),dM\rangle
\end{equation}
and
\begin{equation*}
\int_{\mathbb{R}^n}\mathrm{tr}(dM)\leq -\int_{\mathbb{R}^n}\langle x,d\mu(x)\rangle+\int_{\mathbb{R}^n}\mathrm{tr}(dH).
\end{equation*}
Our result, Theorem \ref{thm:positive}, follows immediately by taking $H=0$ in (\ref{eqn:polari}) and in (\ref{eqn:def}).

The representation formula of Cavalletti and Westdickenberg has found several applications to the compressible Euler equations; see \cite{Westdickenberg1, Westdickenberg2}. 

Let us note a related study of monotone maps by Alberti and Ambrosio \cite{Alberti} and works of Lions \cite{Lions} and of Carlier and Lachand-Robert \cite{Carlier} on representation of the polar cone to convex functions.

Let us mention also a paper by Galichon and Ghoussoub \cite{Galichon2}, which extends a result of Krauss \cite{Krauss}. The latter states that a map $u\colon\mathbb{R}^n\to\mathbb{R}^n$ is monotone if and only if there exists an antisymmetric concave-convex $H\colon\mathbb{R}^n\times\mathbb{R}^n\to\mathbb{R}$ such that $u$ belongs to the subdifferential, in the second variable, of $H$. In the paper \cite{Galichon2} a representation of this kind is extended to  jointly $N$-cyclically monotone families of maps. Also a variational formula for such families is provided. It is shown in particular that monotone maps are precisely the polar cone of the tangent cone at identity of the involutive measure preserving maps.

\subsection{Polar factorisation}

The connections of the study of polar cones and the optimal transport problems have been apparent since seminal work of Brenier \cite{Brenier2, Brenier} and subsequent developments of Caffarelli \cite{Caffarelli3,Caffarelli2, Caffarelli4}, Gangbo \cite{Gangbo3} and McCann \cite{McCann}. In \cite{Brenier} it is shown that any non-degenerate map $u\in L^p(X,\mu,\mathbb{R}^n)$, where $\mu$ is a probability measure on $X$, can be uniquely factorised 
\begin{equation*}
u=D\psi\circ s
\end{equation*}
into composition of derivative of a convex function $\psi\colon \Omega\to\mathbb{R}$ and a measure preserving map $s\colon X\to\Omega$, where $\Omega\subset\mathbb{R}^n$ is a bounded domain equipped with normalised Lebesgue measure. This result generalises, among others, the polar factorisation of matrices and the Helmholtz decomposition of vector fields.

Suppose that $\Omega=X$. Consider the polar cone $K$ to the set $S$ of measure preserving maps at the identity map. It may be proven, with the help of a result of Rockafellar \cite{Rockafellar}, that $K$ is precisely the set of gradients of convex functions on $\Omega$, see \cite{Brenier}. On the other hand, the polar cone of $S$ at the identity is the set of all maps such that the projection onto the set $S$ is the identity. If $S$ were a group, then any element could be written as an element of $K$ composed with an element of $S$, thus completing the proof. As $S$ is not a group, a more involved approach is used and the Monge--Kantorovich problems comes into play.

An elementary approach to the polar factorisation, basing merely on the dual problem, is presented in \cite{Gangbo3} and in \cite{Caffarelli3}.

In \cite{McCann} another proof via a geometrical approach is presented. It relies on the cyclical monotonicity introduced by Rockafellar in \cite{Rockafellar}.

In \cite{Caffarelli3, Caffarelli2, Caffarelli4} the regularity properties of the Brenier's solutions are studied.

\subsection{Divergence-measure fields}

Let us note that matrix-valued measures, that are under consideration here, belong to a class that extends the notion of divergence-measure fields to matrix-valued fields; see work of Anzellotii \cite{Anzellotti} for the first appearance of the notion in the literature and works of Chen, Comi, Frid, Torres \cite{GChen5, GChen2, GChen1, GChen3, GChen4} and references therein for more recent studies. Divergence-measure fields are vector fields whose divergences are measures. They have found many connections with and applications to non-linear conservation laws, including the new notions of normal traces, product rules, and Gauss-Green formulae.

The non-linear conservation laws that are considered in the papers cited above are the partial differential equations of the form
\begin{equation*}
\frac{\partial u}{\partial t}+divf(u)=0,
\end{equation*}
where $u\colon \mathbb{R}\times\mathbb{R}^n\to\mathbb{R}^m$ and $f\colon\mathbb{R}^m\to\mathbb{R}^{m\times n}$.

In the theory of such equations one considers so-called entropy solutions, which are weak solutions $u$ such that the Lax entropy inequality
\begin{equation}\label{eqn:sch}
\frac{\partial \eta(u)}{\partial t}+divq(u)\leq 0
\end{equation}
is satisfied for any convex entropy pair ($\eta$, $q$), i.e. a convex function $\eta\colon \mathbb{R}^m\to\mathbb{R}$ and a map $q\colon \mathbb{R}^m\to\mathbb{R}^{ n}$ such that $Dq=D\eta Df$.

Then (\ref{eqn:sch}) and the Riesz' theorem implies that the distribution $\frac{\partial \eta(u)}{\partial t}+divq(u)$ is a non-positive measure, so that the $(m+1)$-dimensional divergence of $(\eta(u),q(u))$ is a measure. This is to say, $(\eta(u),q(u))$ is a divergence-measure field.

\subsection{Further research}

Our interests in this problem was stimulated from another direction: a localisation technique in convex geometry. Klartag \cite{Klartag} has established a relation of the technique to the Monge--Kantorovich problem and has asked whether a generalisation to multiple constraints is conceivable by generalisation of the problem to vector measures. In \cite{Ciosmak3} certain aspects of the Klartag's approach has been successfully extended to this setting. However, in \cite{Ciosmak2} it has been proven that the so-called mass balance condition does not hold in the multi-dimensional context. In \cite{Ciosmak} it has been shown that a generalisation of the method used to prove the mass balance condition in the scalar case does not work in the vector case. Therefore, another approach is needed. The idea is to follow an approach to the Monge--Kantorovich problem present in the work of Evans and Gangbo \cite{Evans-Gangbo}.

Suppose that we are given $\mu\in\mathcal{M}(\mathbb{R}^n,\mathbb{R}^m)$ with $\mu(\mathbb{R}^n)=0$ and with finite first moments. Let $M_0\in\mathcal{M}(\mathbb{R}^n,\mathbb{R}^{m\times n})$ be an optimal measure in (\ref{eqn:measure}) and let $u_0\colon\mathbb{R}^n\to\mathbb{R}^m$ be an optimal $1$-Lipschitz map in (\ref{eqn:lipschitz}), see also Theorem \ref{thm:duality3}.

Suppose that $u_0$ is differentiable $\norm{M_0}_1$-almost everywhere, or less generally, that $M_0$ is absolutely continuous.
Define $dA=Du_0^*dM_0$. 
We shall call $A$ the transport density. 
Observe that it satisfies
\begin{align}
A\text{ is symmetric, positive semidefinite},\label{eqn:positive}\\
-div Du_0A=\mu,\label{eqn:di}\\
\norm{Du_0}\leq 1,\label{eqn:lip}\\
Du_0^*\text{ is an isometry on }\mathrm{im}\frac{dM_0}{d\norm{M_0}_1}.\label{eqn:isom}
\end{align}
This is a direct generalisation of the Monge--Kantorovich system, see \cite[Chapter 4]{Santambrogio}, considered frequently in the shape optimisation problems, see \cite{Bouchitte2, Bouchitte1}.
Indeed, by Theorem \ref{thm:duality3}, there is
\begin{equation*}
\Big\langle Du_0,\frac{dM_0}{d\norm{M_0}_1}\Big\rangle=1, \text{ almost everywhere.}
\end{equation*}
By equality case in matrix H\"older's inequality, Theorem \ref{thm:equality}, (\ref{eqn:positive}) holds true. By Remark \ref{rem:iso}, (\ref{eqn:isom}) holds true. By (\ref{eqn:isom}), equation (\ref{eqn:di}) is equivalent to $-divM_0=\mu$ and (\ref{eqn:lip}) is equivalent to $u_0$ being $1$-Lipschitz.

In the case of probability measures such transport density has been employed in \cite{Evans-Gangbo} to construct the first solution to the Monge--Kantorovich problem with metric cost.

The question that arises is under what conditions on a vector measure one may construct a corresponding optimal transport, in the sense of \cite{Ciosmak2}, see also Remark \ref{rem:Kanto}.

Note that, in the case of optimal transport of vector measures, there might exist no optimal transport for a given measure, as proven in \cite{Ciosmak2}.

In \cite[Theorem 4.16]{Santambrogio} it is shown that if a given measure is absolutely continuous, then so is the associated transport density. An open question is whether it  is true in the case of vector measures.
An equivalent problem is whether Theorem \ref{thm:dualityLebesgue} holds true for $q=1$, $\Omega=\mathbb{R}^n$.

If we knew that $Df_0^*$ is an isometry on $\mathrm{im}H_0$, as it happens if $f_0,H_0$ are optimisers for $q=1$, see Theorem \ref{thm:duality3}, then, after rescaling, see  (\ref{eqn:optifun}),
\begin{equation*}
-div\big( Df_0\abs{Df_0}^p\big)=h.
\end{equation*}
This is an analogue of the $(p+2)$-Laplacian equation for $f_0$, which is extensively used in \cite{Evans-Gangbo} to construct an absolutely continuous solution of the Monge--Kantorovich system and, in consequence, an optimal transport map. 
An interesting problem that arises is to determine whether the strategy used in \cite{Evans-Gangbo} conveys to the setting under consideration.

%
%
%

\subsection*{Outline of the article}

Section \ref{s:inequality} is devoted to a proof of matrix H\"older's inequality (Theorem \ref{thm:holder}) and Section \ref{s:equality} characterises equality cases (Theorem \ref{thm:equality} and Theorem \ref{thm:equalitypq}).

Section \ref{s:dual} is devoted to a proof of the duality theorem for optimal transport of vector measures in its divergence formulation (Theorem \ref{thm:duality3}).
In Section \ref{s:abs} we deal with duality for absolutely continuous measures (Theorem \ref{thm:dualityLebesgue}).

In Section \ref{s:polar} we employ results of the previous sections in order to provide characterisation of the dual cone to monotone maps (Theorem \ref{thm:positive}), thus reproving the result of \cite{Cavalletti4}.
In Section \ref{s:tangent} we generalise the result of Section \ref{s:polar} and provide a representation formula for polar cones to tangent cones of the unit ball of $\mathcal{C}^1(\mathbb{R}^n,\mathbb{R}^m)$ (Proposition \ref{pro:gene} and Theorem \ref{thm:cones}).
In Section \ref{s:tangentabs} we obtain another representation formulae for polar cones to the tangent cones of the unit ball of Sobolev space $\mathcal{W}^{1,p}(\Omega,\mathbb{R}^m)$ (Theorem \ref{thm:abso} and Theorem \ref{thm:polarsob}).

\section{Matrix H\"older's inequality}\label{s:mh}

In this section we consider matrix H\"older's inequality, which constitutes an extension of the classical H\"older's inequality to the matrix setting. The result of the section is a characterisation of equality cases.

\subsection{Inequality}\label{s:inequality}

Theorem \ref{thm:schwarz} and Theorem \ref{thm:holder} and their proofs are based on \cite{Baumgartner}. We refer the reader also to \cite{Bhatia}. We include the proofs for completeness and to provide an analysis of equality cases.

For a matrix $A$ of size $m\times n$ we shall denote by $\abs{A}$ its absolute value, that is
\begin{equation*}
|A|=(A^*A)^{\frac12}.
\end{equation*}

\begin{theorem}\label{thm:schwarz}
Let $A,B$ be two $m\times n$ matrices with real entries. Then
\begin{equation}\label{eqn:traces}
|\mathrm{tr}(A^*B)|\leq (\mathrm{tr} |A||B|)^{\frac12}\big(\mathrm{tr}|A^*||B^*|\big)^{\frac12}.
\end{equation}
\end{theorem}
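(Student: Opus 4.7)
The plan is to reduce the inequality to the standard Cauchy--Schwarz inequality for the Frobenius (Hilbert--Schmidt) inner product $\langle X,Y\rangle = \mathrm{tr}(X^*Y)$, by splitting the product $A^*B$ symmetrically between the two polar factors of $A$ and $B$.

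First, I would invoke the polar decomposition to write $A = U|A|$ and $B = V|B|$, where $U,V$ are partial isometries. Then
\begin{equation*}
\mathrm{tr}(A^*B) = \mathrm{tr}(|A|\,U^*V\,|B|) = \mathrm{tr}\bigl(|A|^{1/2}|B|^{1/2}\cdot|B|^{1/2}U^*V|A|^{1/2}\bigr)\!,
\end{equation*}
after cyclically rotating the factors and splitting $|A|$ and $|B|$ into two square roots each. Setting $X = |B|^{1/2}|A|^{1/2}$ and $Y = |A|^{1/2}U^*V|B|^{1/2}$, this is exactly $\mathrm{tr}(X^*Y)$, so the Cauchy--Schwarz inequality gives
\begin{equation*}
|\mathrm{tr}(A^*B)|^2 \le \mathrm{tr}(X^*X)\,\mathrm{tr}(Y^*Y).
\end{equation*}

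Next, I would identify each factor. By cyclicity of trace,
\begin{equation*}
\mathrm{tr}(X^*X) = \mathrm{tr}\bigl(|A|^{1/2}|B|\,|A|^{1/2}\bigr) = \mathrm{tr}(|A||B|),
\end{equation*}
and
\begin{equation*}
\mathrm{tr}(Y^*Y) = \mathrm{tr}\bigl(|B|^{1/2}V^*U|A|U^*V|B|^{1/2}\bigr) = \mathrm{tr}\bigl((V|B|V^*)(U|A|U^*)\bigr).
\end{equation*}
At this point I would use the standard identity relating the two absolute values via the partial isometry: from $A = U|A|$ one computes $AA^* = U|A|^2U^*$, so $|A^*|=(AA^*)^{1/2} = U|A|U^*$, and likewise $|B^*| = V|B|V^*$. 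Substituting yields $\mathrm{tr}(Y^*Y) = \mathrm{tr}(|A^*||B^*|)$, and combining with the Cauchy--Schwarz bound finishes the proof.

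The only delicate point is justifying $U|A|U^* = |A^*|$ (and the analogue for $B$) when $A$ is rectangular or rank-deficient, so that $U$ is merely a partial isometry; this follows by noting that $U^*U$ is the orthogonal projection onto the range of $|A|$, which fixes $|A|$, so $(U|A|U^*)^2 = U|A|^2U^* = AA^*$ and $U|A|U^*$ is positive semidefinite, hence equals $|A^*|$ by uniqueness of the square root. With this lemma in hand the rest is bookkeeping with the cyclic trace.
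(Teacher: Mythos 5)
Your proof is a genuinely different route from the paper's. The paper applies Cauchy--Schwarz to an explicit double sum built from the singular value decompositions of $A$ and $B$ (Lemma \ref{lem:basis}), which is hands-on and coordinate-dependent; your proof replaces this with the polar decompositions $A=U|A|$, $B=V|B|$ and Cauchy--Schwarz for the Frobenius inner product $\langle X,Y\rangle=\mathrm{tr}(X^*Y)$, together with the identity $|A^*|=U|A|U^*$, which you justify correctly including the partial-isometry subtleties. Your version is cleaner and coordinate-free, which is appealing; the paper's coordinate version has the practical advantage that the equality case (Remark \ref{rem:eqhol}), later used heavily in Section \ref{s:equality}, drops out immediately from the scalar Cauchy--Schwarz equality condition on the double sum -- a payoff your formulation does not offer quite so directly.

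There is, however, a genuine bookkeeping error in the crucial split. Writing $P=|A|$, $Q=U^*V$, $R=|B|$, your displayed identity is $\mathrm{tr}(PQR)=\mathrm{tr}(P^{1/2}R^{1/2}\cdot R^{1/2}QP^{1/2})$; but the right-hand side equals $\mathrm{tr}(PRQ)$, not $\mathrm{tr}(PQR)$, and $\mathrm{tr}(PQR)\neq\mathrm{tr}(PRQ)$ in general (cyclicity permutes cyclically, not arbitrarily). Moreover your stated $X=|B|^{1/2}|A|^{1/2}$, $Y=|A|^{1/2}U^*V|B|^{1/2}$ do not satisfy $\mathrm{tr}(X^*Y)=\mathrm{tr}(A^*B)$ either, since $\mathrm{tr}(X^*Y)=\mathrm{tr}\bigl(|B|^{1/2}|A|^{1/2}|B|^{1/2}|A|^{1/2}U^*V\bigr)$ rather than $\mathrm{tr}\bigl(|B||A|U^*V\bigr)$. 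The fix is simple: cycle $|B|^{1/2}$ (not $|A|^{1/2}$) to the front, obtaining
\begin{equation*}
\mathrm{tr}(A^*B)=\mathrm{tr}\bigl(|B|^{1/2}|A|^{1/2}\cdot|A|^{1/2}U^*V|B|^{1/2}\bigr),
\end{equation*}
and take $X=|A|^{1/2}|B|^{1/2}$ (so that $X^*=|B|^{1/2}|A|^{1/2}$) with $Y=|A|^{1/2}U^*V|B|^{1/2}$ as you have it. Then $\mathrm{tr}(X^*Y)=\mathrm{tr}(A^*B)$, and your computations $\mathrm{tr}(X^*X)=\mathrm{tr}(|A||B|)$ and $\mathrm{tr}(Y^*Y)=\mathrm{tr}(|A^*||B^*|)$ go through unchanged. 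With this correction the argument is complete and correct.
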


\begin{lemma}\label{lem:basis}
Let $A$ be an $m\times n$ matrix with real entries. Then there exist orthonormal basis $(e_i)_{i=1}^n$ and $(f_j)_{j=1}^m$ and non-negative numbers $(a_i)_{i=1}^{n\wedge m}$ such that 
\begin{equation*}
\begin{aligned}
&Ae_i=a_if_i\text{ and }A^*f_j=a_je_j\text{ for all }i,j=1,\dotsc,n\wedge m\\
& \text{ and }Ae_i=0, A^*f_j=0\text{ for }i,j>n\wedge m.
\end{aligned}
\end{equation*}
\end{lemma}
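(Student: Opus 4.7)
\medskip
\noindent\textbf{Proof plan.}
The statement is the real singular value decomposition, so the plan is to reduce it to the spectral theorem applied to the symmetric positive semidefinite matrix $A^*A$. First I would apply the spectral theorem to $A^*A$, which is an $n\times n$ real symmetric matrix with non-negative eigenvalues, to obtain an orthonormal basis $(e_i)_{i=1}^n$ of $\mathbb{R}^n$ together with eigenvalues $\lambda_i\ge 0$. I would reorder so that $\lambda_1\ge\dotsb\ge\lambda_n\ge 0$ and set $a_i=\sqrt{\lambda_i}$ for $i=1,\dotsc,n\wedge m$.

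Next, for each index $i\le n\wedge m$ with $a_i>0$, define $f_i=a_i^{-1}Ae_i$. The orthonormality of the $f_i$'s follows from the computation
\begin{equation*}
\langle Ae_i, Ae_j\rangle=\langle A^*Ae_i,e_j\rangle=\lambda_i\langle e_i,e_j\rangle=\lambda_i\delta_{ij},
\end{equation*}
so $\langle f_i,f_j\rangle=\delta_{ij}$ on the relevant index set. For any index $i$ with $\lambda_i=0$ (including all $i>n\wedge m$ when $m<n$, since then $A^*A$ has rank at most $m$) one has $\|Ae_i\|^2=\langle A^*Ae_i,e_i\rangle=0$, hence $Ae_i=0$, which takes care of the vanishing conditions.

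Then I would extend the orthonormal family $\{f_i:a_i>0\}$ to a full orthonormal basis $(f_j)_{j=1}^m$ of $\mathbb{R}^m$, placing the extension vectors in the orthogonal complement of $\mathrm{im}(A)=\mathrm{span}\{f_i:a_i>0\}$. The identity $A^*f_j=a_je_j$ for $j\le n\wedge m$ with $a_j>0$ follows from
\begin{equation*}
A^*f_j=a_j^{-1}A^*Ae_j=a_j^{-1}\lambda_je_j=a_je_j,
\end{equation*}
and for $j$ with $a_j=0$ (or $j>n\wedge m$) the equality $A^*f_j=0$ holds because $f_j$ lies in $\mathrm{im}(A)^\perp=\ker(A^*)$.

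\medskip
\noindent\textbf{Main obstacle.}
There is no conceptual difficulty, only bookkeeping: the two cases $m\le n$ and $m>n$ must be tracked so that the indexing up to $n\wedge m$ matches the vanishing conditions for $i,j>n\wedge m$. When $m<n$ this requires noting that at least $n-m$ of the eigenvalues of $A^*A$ are zero; when $n<m$ it requires completing the $f_i$'s to a basis of $\mathbb{R}^m$ by vectors orthogonal to $\mathrm{im}(A)$.
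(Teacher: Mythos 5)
Your proposal is correct and follows essentially the same route as the paper: diagonalise $A^*A$ by the spectral theorem, set $f_i=a_i^{-1}Ae_i$ for $a_i>0$, verify orthonormality via $\langle Ae_i,Ae_j\rangle=\lambda_i\delta_{ij}$, and complete to a basis of $\mathbb{R}^m$ using vectors in $\ker A^*$. The only cosmetic difference is that the paper phrases the completion step in terms of eigenvectors of $AA^*$ with zero eigenvalue, whereas you invoke $\mathrm{im}(A)^\perp=\ker A^*$ directly; these are the same observation.
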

\begin{proof}
For the proof, let $(e_i)_{i=1}^n$ be an orthonormal basis of eigenvectors of $A^*A$ with non-negative eigenvalues $(a_i^2)_{i=1}^n$, $a_i\geq 0$. We may assume that for $i\geq n\wedge m$ there is  $A^*Ae_i=0$, as the rank of $A^*A$ is at most $n\wedge m$. 

For indices such that $a_i=0$, we have $Ae_i=0$, as $\norm{Ae_i}^2=\langle A^*Ae_i,e_i\rangle=0$.
Set 
\begin{equation}\label{eqn:eigen}
f_j=\frac{1}{a_j}Ae_j\text{ for }j\text{ such that }a_j\neq 0.
\end{equation}
These are orthonormal, eigenvectors of $AA^*$ such that $A^*f_j=a_je_j$. 

Note that any eigenvector of $AA^*$ that is orthogonal to all $f_j$ has eigenvalue equal to zero. Indeed, if $f$ is such an eigenvector, then for all $j$ 
\begin{equation*}
0=\langle f, Ae_j\rangle=\langle A^*f,e_j\rangle,
\end{equation*}
that is $A^*f=0$ and thus $AA^*f=0$.

We may thus complement the eigenvectors (\ref{eqn:eigen})  to a full orthonormal basis $(f_j)_{j=1}^m$ by introducing eigenvectors of $AA^*$ with zero eigenvalue. For such vectors we have $A^*f_j=0$. 

This completes the proof of the lemma.
\end{proof}

\begin{proof}[Proof of Theorem \ref{thm:schwarz}]
Take orthonormal basis $(e_i)_{i=1}^n$, $(f_j)_{j=1}^m$ and non-negative numbers $(a_i)_{i=1}^{n\wedge m}$ for $A$ and $(g_i)_{i=1}^n$, $(h_j)_{j=1}^m$, $(b_i)_{i=1}^{n\wedge m}$ for $B$, as in Lemma \ref{lem:basis}.
Then
\begin{equation*}
|\mathrm{tr}(A^*B)|=\Big|\sum_{i,j=1}^{n\wedge m}a_ib_j\langle e_i,g_j\rangle \langle f_i, h_j\rangle\Big|.
\end{equation*}
The Cauchy--Schwarz inequality yields
\begin{equation*}
|\mathrm{tr}(A^*B)|\leq \Big(\sum_{i,j=1}^{n\wedge m}a_ib_j\langle e_i,g_j\rangle^2 \Big)^{\frac12}\Big(\sum_{i,j=1}^{n\wedge m}a_ib_j\langle f_i,h_j\rangle^2\Big)^{\frac12}.
\end{equation*}
Note now that for $i,j\leq n\wedge m$ we have
\begin{equation*}
|A|e_i=a_ie_i, |A^*|f_i=a_if_i\text{ and }|B|g_j=b_jg_j, |B^*|h_j=b_jh_j,
\end{equation*}
and for $i,j>n\wedge m$ 
\begin{equation*}
|A|e_i=0, |A^*|f_i=0\text{ and }|B|g_j=0, |B^*|h_j=0.
\end{equation*}
Therefore
\begin{equation*}
 |\mathrm{tr}(A^*B)|\leq (\mathrm{tr}(|A||B|)^{\frac12}(\mathrm{tr}(|A^*||B^*|)^{\frac12}.
\end{equation*}
\end{proof}

\begin{remark}\label{rem:eqhol}
The equality holds in the above inequality if and only if 
\begin{equation*}
(\langle e_i,g_j\rangle-\alpha \langle f_i,h_j\rangle)a_ib_j=0
\end{equation*}
for some constant $\alpha$ and all indices $i,j$.
\end{remark}

Below $\norm{\cdot}$ denotes the operator norm of a matrix, regarded as a linear operator between Euclidean spaces.

\begin{theorem}\label{thm:holder}
Let $A,B$ be two $m\times n$ matrices with real entries. Then
\begin{equation}\label{eqn:shat}
|\mathrm{tr}(A^*B)|\leq (\mathrm{tr} |A|^{p})^{\frac1p}(\mathrm{tr}(|B|^q)^{\frac1q}
\end{equation}
for all $p,q\in (1,\infty)$ such that $\frac1p+\frac1q=1$.
Moreover
\begin{equation}\label{eqn:oper}
|\mathrm{tr}(A^*B)|\leq \mathrm{tr} |A|\norm{B}.
\end{equation}
\end{theorem}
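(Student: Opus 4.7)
My plan is to derive (\ref{eqn:shat}) by re-running the singular-value expansion used in the proof of Theorem \ref{thm:schwarz}, but replacing the final Cauchy--Schwarz step with H\"older's inequality. Let $(e_i),(f_i),(a_i)$ and $(g_j),(h_j),(b_j)$ be the data produced by Lemma \ref{lem:basis} for $A$ and $B$, and set $P_{ij}=\langle e_i,g_j\rangle^2$ and $Q_{ij}=\langle f_i,h_j\rangle^2$. Because $(e_i),(g_j)$ sit inside orthonormal bases of $\mathbb{R}^n$, and $(f_i),(h_j)$ inside orthonormal bases of $\mathbb{R}^m$, both $P$ and $Q$ are sub-doubly-stochastic, meaning that all of their row- and column-sums are at most $1$. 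Expanding the trace exactly as in the proof of Theorem \ref{thm:schwarz} and applying the triangle inequality,
\begin{equation*}
\abs{\mathrm{tr}(A^*B)}\le\sum_{i,j}a_ib_j\,\abs{\langle e_i,g_j\rangle}\,\abs{\langle f_i,h_j\rangle},
\end{equation*}
and a single Cauchy--Schwarz on $\bigl(\sqrt{a_ib_jP_{ij}}\bigr)_{i,j}$ and $\bigl(\sqrt{a_ib_jQ_{ij}}\bigr)_{i,j}$ separates this into
\begin{equation*}
\abs{\mathrm{tr}(A^*B)}\le\Bigl(\sum_{i,j}a_ib_jP_{ij}\Bigr)^{1/2}\Bigl(\sum_{i,j}a_ib_jQ_{ij}\Bigr)^{1/2}.
\end{equation*}

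I would then bound each factor by $\norm{A}_p\norm{B}_q$. For the $P$-factor, H\"older in $j$ with exponents $q,p$ (using $\sum_j P_{ij}\le 1$) gives $\sum_j b_j P_{ij}\le\bigl(\sum_j b_j^q P_{ij}\bigr)^{1/q}$; a second H\"older in $i$ with the same exponents (using $\sum_i P_{ij}\le 1$ to collapse the residual sum) then yields
\begin{equation*}
\sum_{i,j}a_ib_jP_{ij}\le\Bigl(\sum_i a_i^p\Bigr)^{1/p}\Bigl(\sum_j b_j^q\Bigr)^{1/q}=\norm{A}_p\norm{B}_q,
\end{equation*}
and the identical argument handles $Q$. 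Multiplying the two square roots delivers (\ref{eqn:shat}).

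The operator-norm bound (\ref{eqn:oper}) is considerably easier: expanding the trace in the basis $(e_i)$ and using $Ae_i=a_if_i$ from Lemma \ref{lem:basis}, one has
\begin{equation*}
\mathrm{tr}(A^*B)=\sum_i\langle Ae_i,Be_i\rangle=\sum_i a_i\langle f_i,Be_i\rangle,
\end{equation*}
and since $\abs{\langle f_i,Be_i\rangle}\le\norm{B}$ by the very definition of the operator norm, summing in $i$ gives $\abs{\mathrm{tr}(A^*B)}\le\norm{B}\sum_i a_i=\norm{B}\,\mathrm{tr}\abs{A}$.

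I expect the main obstacle to be bookkeeping rather than conceptual: since $(f_i),(h_j)$ need not span $\mathbb{R}^m$ when $n<m$ (and analogously when $n>m$), one has to verify carefully that the convention $a_i=b_j=0$ for $i,j>n\wedge m$ is consistent with the index ranges needed to make $P$ and $Q$ sub-doubly-stochastic. Once this is in place, the two successive H\"older applications chain cleanly to produce exactly the Schatten norms, and no iteration or dyadic-to-general interpolation argument is required.
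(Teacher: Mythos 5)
Your argument for (\ref{eqn:shat}) is correct and in essence the paper's: expand $\mathrm{tr}(A^*B)$ via the bases from Lemma \ref{lem:basis}, Cauchy--Schwarz to split the double sum into the $P$- and $Q$-factors (this is exactly Theorem \ref{thm:schwarz}), then bound each factor by $\norm{A}_p\norm{B}_q$ using that the row/column sums of $P$ and $Q$ are at most $1$. The only cosmetic difference is that the paper applies H\"older once on the full double sum $\sum_{i,j}a_ib_jP_{ij}$ with weights $P_{ij}$ and then bounds each resulting factor via $\sum_jP_{ij}\le 1$, $\sum_iP_{ij}\le 1$, whereas you chain two separate H\"older applications (first in $j$, then in $i$); both rest on exactly the same sub-doubly-stochastic bounds and yield identical conclusions, so this is not a different route. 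Your proof of (\ref{eqn:oper}), however, is genuinely shorter than the paper's: the paper again routes through Theorem \ref{thm:schwarz}, bounding both $\mathrm{tr}(|A||B|)$ and $\mathrm{tr}(|A^*||B^*|)$ by $\norm{B}\,\mathrm{tr}|A|$ using $b_j\le\norm{B}$, while you simply write $\mathrm{tr}(A^*B)=\sum_i a_i\langle f_i,Be_i\rangle$ and invoke $\abs{\langle f_i,Be_i\rangle}\le\norm{B}$, bypassing the Cauchy--Schwarz decomposition entirely. Both are valid; your version is cleaner but, if you cared about the equality conditions recorded in Remark \ref{rem:eqholp}, the paper's route through Theorem \ref{thm:schwarz} gives them for free. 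One last note: the bookkeeping concern you flag at the end is already resolved by Lemma \ref{lem:basis}, which completes $(e_i)$ and $(f_j)$ to full orthonormal bases of $\mathbb{R}^n$ and $\mathbb{R}^m$ with $a_i=0$ for $i>n\wedge m$, so the sub-doubly-stochastic bounds on $P$ and $Q$ hold without further checking.
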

\begin{proof}
Let $p,q\in (1,\infty)$ be such that $\frac1p+\frac1q=1$. Using the notation from the above theorem, let us note that
\begin{equation*}
\mathrm{tr}|A|^p=\mathrm{tr}|A^*|^p=\sum_{i=1}^{n\wedge m}a_i^p.
\end{equation*}
Hence, for the proof it is enough to show that every factor on the right-hand side of the inequality (\ref{eqn:traces}) is bounded above by $(\mathrm{tr} |A|^{p})^{\frac1p}(\mathrm{tr}(|B|^q)^{\frac1q}$. For this, observe that, thanks to orthonormality of the basis,
\begin{equation*}
\sum_{i=1}^n\langle e_i,g_j\rangle^2=1,
\end{equation*}
for all $j=1,\dotsc,m$ and that
\begin{equation*}
\sum_{j=1}^m\langle e_i,g_j\rangle^2=1,
\end{equation*}
for all $i=1,\dotsc,n$.
Therefore, using H\"older's inequality, we get
\begin{align*}
\mathrm{tr}(|A||B|)&=\sum_{i,j=1}^{n\wedge m}a_ib_j\langle e_i,g_j\rangle^2\leq \Big(\sum_{i,j=1}^{n\wedge m}a_i^p\langle e_i,g_j\rangle^2\Big)^{\frac1p}\Big(\sum_{i,j=1}^{n\wedge m}b_j^q\langle e_i,g_j\rangle^2\Big)^{\frac1q}=\\
&=\Big(\sum_{i,j=1}^{n\wedge m}a_i^p\Big)^{\frac1p}\Big(\sum_{i,j=1}^{n\wedge m}b_j^q\Big)^{\frac1q}=(\mathrm{tr}|A|^p)^{\frac1p}(\mathrm{tr}|B|^q)^{\frac1q}
\end{align*}
Proceeding analogously for $\mathrm{tr}(|A^*||B^*|)$ we get the desired inequality.

For the second part of the theorem observe that
\begin{equation*}
\norm{B}=\norm{B^*}=\max\{b_i\mid i=1,\dotsc,n\wedge m \}
\end{equation*}
and that
\begin{equation*}
\mathrm{tr}|A|=\mathrm{tr}|A^*|=\sum_{i,j=1}^{n\wedge m}a_i.
\end{equation*}
Therefore
\begin{equation*}
\mathrm{tr}(|A||B|)=\sum_{i,j=1}^{n\wedge m}a_ib_j\langle e_i,g_j\rangle^2\leq \norm{B}\mathrm{tr}|A|.
\end{equation*}
Proceeding analogously for $\mathrm{tr}(|A^*||B^*|)$ we get the desired inequality.
\end{proof}

\begin{remark}\label{rem:eqholp}
If $p,q\in(1,\infty)$, then the equality in inequality (\ref{eqn:shat}) in the above theorem holds true if and only if there exists a constant $\beta$ such that
\begin{equation}\label{eqn:equalty0}
(a_i^p-\beta b_j^q)\langle e_i, g_j\rangle=0\text{ for all }i=1,\dotsc, n\text{ and }j=1,\dotsc,m
\end{equation}
and a constant $\alpha$ such that
\begin{equation}\label{eqn:equality}
(\langle e_i,g_j\rangle-\alpha \langle f_i,h_j\rangle)a_ib_j=0\text{ for all }i=1,\dotsc, n\text{ and }j=1,\dotsc,m.
\end{equation}
For the inequality (\ref{eqn:oper}) in the theorem, there holds equality if and only if (\ref{eqn:equality}) is satisfied and moreover
\begin{equation}\label{eqn:equality2}
a_i(b_j-b)\langle e_i, g_j\rangle=0\text{ for all }i=1,\dotsc, n\text{ and }j=1,\dotsc,n
\end{equation}
and some number $b$.
\end{remark}

\subsection{Equality cases}\label{s:equality}

\begin{example}\label{exa:identity}
Suppose that $n=m$ and that $B$ is the identity matrix. Then the inequality (\ref{eqn:oper}) yields that for all $n\times n$ matrices $A$ there is
\begin{equation*}
|\mathrm{tr}A|\leq \mathrm{tr}|A|.
\end{equation*}
Equality here holds if and only if $A$ is symmetric and semi-definite. Indeed, it holds if and only if (\ref{eqn:equality}) and (\ref{eqn:equality2}) are satisfied. Note that (\ref{eqn:equality2}) for $B=\mathrm{Id}$ clearly holds true. Condition (\ref{eqn:equality}) holds if and only if 
\begin{equation*}
(\langle e_i, e_j\rangle-\alpha \langle f_i,e_j\rangle )a_i=0\text{ for all }i=1,\dotsc,n.
\end{equation*}
Here we took $g_j=h_j=e_j$.
Equivalently, for any $i$ such that $a_i\neq 0$ and all $j=1,\dotsc,n$
\begin{equation*}
\langle e_i-\frac{\alpha}{a_i}Ae_i,e_j\rangle=0.
\end{equation*}
That is
\begin{equation*}
Ae_i=\frac{a_i}{\alpha}e_i.
\end{equation*}
If $a_i=0$, then $Ae_i=0$. We see thus that $A$ is diagonal with  eigenvalues of fixed sign and thus it is symmetric and semi-definite. The converse implication is obvious.
\end{example}

\begin{definition}
For $p\in [1,\infty]$, the quantity $(\mathrm{tr}|A|^p)^{\frac1p}$ is called the Schatten $p$-norm of a matrix $A$. We shall denote it by $\norm{A}_p$. We shall write $\langle A, B\rangle = \mathrm{tr}(AB^*)$.
\end{definition}

Let us now analyse carefully what the conditions (\ref{eqn:equality}) and (\ref{eqn:equality2}) mean.

\begin{theorem}\label{thm:equality}
Suppose that $A,B$ are $m\times n$ matrices such that $\norm{B}\leq 1$. Then the condition
\begin{equation}\label{eqn:trace}
\mathrm{tr}(A^*B)=\mathrm{tr}(|A|)
\end{equation}
holds if and only if $A^*B$ is symmetric and positive semi-definite and 
\begin{equation}\label{eqn:four}
A^*BB^*A=A^*A.
\end{equation}
Moreover, if $A\neq 0$, then $\norm{B}=1$.
\end{theorem}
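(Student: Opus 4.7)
The plan is to prove both directions by reducing the two stated conditions to the single identity $A^*B=|A|$, and then extract this identity from the trace equality using the orthonormal frames of Lemma~\ref{lem:basis}.

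For the easier implication, suppose $A^*B$ is symmetric positive semi-definite and $A^*BB^*A=A^*A$. Symmetry gives $B^*A=A^*B$, hence $(A^*B)^2=A^*BB^*A=A^*A=|A|^2$. Uniqueness of the positive semi-definite square root forces $A^*B=|A|$, and taking traces yields $\mathrm{tr}(A^*B)=\mathrm{tr}|A|$.

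For the nontrivial direction, assume $\mathrm{tr}(A^*B)=\mathrm{tr}|A|$. The case $A=0$ is trivial, so suppose $A\neq 0$ and pick orthonormal bases $(e_i)_{i=1}^n$, $(f_j)_{j=1}^m$ and singular values $(a_i)_{i=1}^{n\wedge m}$ adapted to $A$ as in Lemma~\ref{lem:basis}. Then
\begin{equation*}
\mathrm{tr}(A^*B)=\sum_{i=1}^n\langle Be_i,Ae_i\rangle=\sum_{i=1}^{n\wedge m}a_i\langle Be_i,f_i\rangle\quad\text{while}\quad \mathrm{tr}|A|=\sum_{i=1}^{n\wedge m}a_i.
\end{equation*}
Since $\|B\|\leq 1$, Cauchy--Schwarz gives $\langle Be_i,f_i\rangle\leq 1$ for each $i$, so the assumed equality forces $\langle Be_i,f_i\rangle=1$ for every $i$ with $a_i>0$. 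The equality case of Cauchy--Schwarz, combined with $\|Be_i\|\leq 1$, then yields $Be_i=f_i$; in particular $\|B\|=1$. Dualising, $\langle B^*f_i,e_i\rangle=1$ and $\|B^*f_i\|\leq 1$ give $B^*f_i=e_i$ for the same $i$.

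I would then show $A^*B=|A|$ by evaluating on each $e_i$. When $a_i>0$, $A^*Be_i=A^*f_i=a_ie_i=|A|e_i$. When $a_i=0$ (or $i>n\wedge m$), I expand $Be_i=\sum_k\langle Be_i,f_k\rangle f_k$; for indices $k$ with $a_k>0$ one has $\langle Be_i,f_k\rangle=\langle e_i,B^*f_k\rangle=\langle e_i,e_k\rangle=0$, while for the remaining $k$ one has $A^*f_k=0$, so $A^*Be_i=0=|A|e_i$. Hence $A^*B=|A|$, from which the symmetry, positive semi-definiteness and $A^*BB^*A=|A|^2=A^*A$ are immediate. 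The main subtlety is this last bookkeeping step: one must verify that the unconstrained values of $Be_i$ on the kernel part of $A$ land in $\ker A^*$ and therefore contribute nothing to $A^*B$, which is precisely what the identities $B^*f_k=e_k$ on the range part ensure.
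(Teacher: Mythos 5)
Your proof is correct, and it takes a genuinely different and somewhat cleaner route than the paper's. The paper derives Theorem~\ref{thm:equality} by invoking the equality-case conditions (\ref{eqn:equality}) and (\ref{eqn:equality2}) from Remark~\ref{rem:eqholp}, which are stated for the two-sided Schwarz/H\"older chain and involve the singular value decompositions of \emph{both} $A$ and $B$; it then manipulates those conditions (via the constants $\alpha$, $b$ and the trace identity (\ref{eqn:cosikp})) to extract symmetry, semidefiniteness, and the isometry relation. You instead bypass that machinery entirely: you use only the singular frame of $A$ from Lemma~\ref{lem:basis}, the elementary pointwise bound $\langle Be_i,f_i\rangle\le 1$ from Cauchy--Schwarz, and the equality case of Cauchy--Schwarz together with $\|Be_i\|\le 1$ to force $Be_i=f_i$ and $B^*f_i=e_i$ on the range part of $A$. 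This yields the sharper reformulation $A^*B=|A|$ directly, from which both stated conditions (and conversely) follow by a two-line square-root argument. The bookkeeping on the kernel part is handled correctly: for $a_i=0$ each coefficient $\langle Be_i,f_k\rangle$ with $a_k>0$ vanishes by orthogonality of $e_i$ and $e_k=B^*f_k$, and the remaining $f_k$ lie in $\ker A^*$. The trade-off is that the paper's route is organized to reuse the general equality Remarks across several exponents $p$, whereas your argument is tailored to the $p=1$, $q=\infty$ case but is self-contained and does not rely on the unproved assertions in those remarks.
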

\begin{proof}
If $A=0$, then the equivalence clearly holds true. Suppose that $A\neq 0$ and that (\ref{eqn:trace}) holds true. Then, by (\ref{eqn:oper}), we have
\begin{equation*}
\mathrm{tr}(|A|)=\mathrm{tr}(A^*B)\leq \mathrm{tr}(|A|)\norm{B}\leq \mathrm{tr}(|A|).
\end{equation*}
Therefore the equality (\ref{eqn:trace}) holds if and only if the conditions (\ref{eqn:equality}) and (\ref{eqn:equality2}) are satisfied and $\norm{B}=1$. 

Suppose that these conditions hold true. Note that if $\alpha=0$ in (\ref{eqn:equality}), then we would have $\mathrm{tr}(A^*B)=0$, contrary to the assumptions. Therefore $\alpha\neq 0$.  
Condition (\ref{eqn:equality2}) may be equivalently stated as 
\begin{equation*}
(b_j-b)g_j\in\mathrm{ker}A\text{ for all }j=1,\dotsc, m.
\end{equation*}
Indeed, we may write for $i=1,\dotsc, n$ and $j=1,\dotsc, m$
\begin{equation*}
0=a_i(b_j-b)\langle e_i,g_j\rangle=\langle A^*f_i,(b_j-b)g_j\rangle=\langle f_i, A(b_j-b)g_j\rangle.
\end{equation*}
Thus we have $A(b_j-b)g_j=0$. 

Condition (\ref{eqn:equality}), with the above observation, implies that
\begin{align*}
0&=(\langle e_i,g_j\rangle -\alpha\langle f_i,h_j\rangle)a_ib_j=\langle A^*f_i, b_jg_j\rangle-\alpha \langle Ae_i,Bg_j\rangle=\\
&=\langle f_i, Ab_jg_j\rangle-\alpha \langle B^*Ae_i,g_j\rangle=\langle f_i,Abg_j\rangle -\alpha \langle B^*Ae_i,g_j\rangle=\\
&=\langle ba_ie_i,g_j\rangle-\alpha\langle B^*Ae_i,g_j\rangle.
\end{align*}
It follows that for $i=1,\dotsc,n$
\begin{equation*}
B^*Ae_i=a_i\frac{b}{\alpha}e_i.
\end{equation*}
Thus $B^*A$ is symmetric and semi-definite. Hence 
\begin{equation}\label{eqn:cosikp}
\mathrm{tr}(|A|)=\mathrm{tr}(A^*B)=\mathrm{tr}(|B^*A|)=\frac{b}{|\alpha|}\mathrm{tr}(|A|).
\end{equation}
This is to say, $\frac{b}{|\alpha|}=1$. It follows that 
\begin{equation*}
A^*BB^*A=A^*A.
\end{equation*}
Observe also that $B^*A$ is positive semi-definite, as the quantities in (\ref{eqn:cosikp}) are non-negative.

Conversely, if $\norm{B}=1$, $A^*B$ is symmetric and positive semi-definite and 
\begin{equation*}
A^*BB^*A=A^*A, 
\end{equation*}
then
\begin{equation*}
|A|=|A^*B|=A^*B
\end{equation*}
and thus
\begin{equation*}
\mathrm{tr}(A^*B)=\mathrm{tr}(|A|).
\end{equation*}
\end{proof}

\begin{corollary}\label{col:diagonalise}
Suppose that $A,B$ are as above. Then
\begin{equation*}
B^*A=A^*B, BA^*=AB^*
\end{equation*}
are positive semi-definite and
\begin{equation*}
A^*BB^*A=A^*A, AB^*BA^*=AA^*.
\end{equation*}
Moreover $A^*A$ and $B^*B$ diagonalise in a common orthonormal basis.
\end{corollary}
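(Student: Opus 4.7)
The plan is to apply Theorem \ref{thm:equality} twice --- once to $(A,B)$ and once to $(A^*,B^*)$ --- in order to secure both the symmetry and positive semi-definiteness claims and the two quartic identities, and then to deduce the simultaneous diagonalisation from the sharper identity $B^*A=\abs{A}$.

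Theorem \ref{thm:equality} applied to $(A,B)$ immediately gives that $A^*B$ is symmetric positive semi-definite and $A^*BB^*A=A^*A$, and symmetry yields $B^*A=(A^*B)^*=A^*B$. For the dual statements, note that $(A^*,B^*)$ satisfies the same hypotheses: $\norm{B^*}=\norm{B}\leq 1$ and $\mathrm{tr}((A^*)^*B^*)=\mathrm{tr}(AB^*)=\mathrm{tr}(B^*A)=\mathrm{tr}(A^*B)=\mathrm{tr}\abs{A}=\mathrm{tr}\abs{A^*}$, the last equality holding because $\abs{A}$ and $\abs{A^*}$ share the same non-zero eigenvalues. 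Theorem \ref{thm:equality} then delivers $AB^*$ symmetric positive semi-definite, hence $BA^*=(AB^*)^*=AB^*$, together with $AB^*BA^*=AA^*$.

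For the common diagonalisation I would first upgrade $A^*BB^*A=A^*A$ to $B^*A=\abs{A}$: since $B^*A$ is already positive semi-definite and $(B^*A)^2=A^*A$, uniqueness of the positive semi-definite square root forces $B^*A=(A^*A)^{1/2}=\abs{A}$. Take bases $(e_i)_{i=1}^n$, $(f_j)_{j=1}^m$ and singular values $(a_i)_{i=1}^{n\wedge m}$ for $A$ from Lemma \ref{lem:basis}. Then $B^*Ae_i=\abs{A}e_i=a_ie_i$ combined with $Ae_i=a_if_i$ gives $B^*f_i=e_i$ for every $i$ with $a_i>0$. To promote this to $Be_i=f_i$ for such $i$, I would expand $Be_i=\sum_j\langle Be_i,f_j\rangle f_j$ in the ONB $(f_j)$: for $a_j>0$ the coefficient equals $\langle e_i,B^*f_j\rangle=\delta_{ij}$, so $Be_i=f_i+t_i$ with $t_i\in\mathrm{span}(f_j:a_j=0)$, and then $\norm{Be_i}^2=1+\norm{t_i}^2\leq\norm{B}^2\leq 1$ forces $t_i=0$. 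Consequently $B^*Be_i=B^*f_i=e_i$, so $B^*B$ is the identity on $V=(\mathrm{ker}\,A)^\perp=\mathrm{span}(e_i:a_i>0)$. Symmetry of $B^*B$ makes $V^\perp=\mathrm{ker}\,A$ invariant; diagonalising $B^*B|_{V^\perp}$ in any orthonormal basis of $V^\perp$ and combining with the $(e_i)_{a_i>0}$ produces an orthonormal basis of $\mathbb{R}^n$ on which $A^*A$ (eigenvalues $a_i^2$ on $V$, $0$ on $V^\perp$) and $B^*B$ (identity on $V$, diagonal on $V^\perp$) are both diagonal. The only step that genuinely exploits $\norm{B}\leq 1$ beyond Theorem \ref{thm:equality} is the passage from $B^*f_i=e_i$ to $Be_i=f_i$; the rest is a direct invocation of the theorem together with routine linear algebra.
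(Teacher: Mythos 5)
Your proof is correct, and it follows the paper's route exactly for the first two claims: apply Theorem~\ref{thm:equality} once to $(A,B)$ and once to $(A^*,B^*)$, using $\mathrm{tr}(AB^*)=\mathrm{tr}(A^*B)$ and $\mathrm{tr}|A^*|=\mathrm{tr}|A|$ to verify the hypotheses in the second application. For the simultaneous diagonalisation of $A^*A$ and $B^*B$, however, the paper takes a shorter but less explicit path: it simply asserts the commutation relation $A^*AB^*B=B^*BA^*A$ and invokes the standard spectral fact for commuting symmetric matrices. You instead upgrade the quartic identity to $B^*A=\abs{A}$ via uniqueness of the PSD square root, then exploit $\norm{B}\leq 1$ to pass from $B^*f_i=e_i$ to $Be_i=f_i$, concluding that $B^*B$ acts as the identity on $(\ker A)^\perp$ and leaves $\ker A$ invariant; a common basis is then assembled block by block. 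Your route has the merit of producing the common basis constructively and of not requiring the reader to supply the missing justification of the commutation identity (which the paper omits; it can be obtained, e.g., from $A^*AB^*B=A^*(AB^*)B=A^*\abs{A^*}B=\abs{A}A^*B=\abs{A}^2=A^*A$, using the intertwining $A^*\abs{A^*}=\abs{A}A^*$ and then transposing). Both approaches are valid; yours trades a terse abstract fact for a slightly longer but fully self-contained computation, and in fact implicitly establishes the stronger conclusion $A^*AB^*B=A^*A$.
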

\begin{proof}
Observe that $\mathrm{tr}(A^*B)=\mathrm{tr}(AB^*)$ and that $\mathrm{tr}(|A|)=\mathrm{tr}(|A^*|)$. Thus the assertion follows by an application of Theorem \ref{thm:equality}. The fact that $A^*A$ and $B^*B$ diagonalise in a common orthonormal basis is a consequence of
\begin{equation*}
A^*AB^*B=B^*BA^*A.
\end{equation*}
\end{proof}

\begin{remark}\label{rem:iso}
Note that condition (\ref{eqn:four}) takes the form
\begin{equation*}
BB^*=\mathrm{Id},
\end{equation*}
provided that $A$ is invertible. This is to say, $B$ is then an isometry.
If $A$ is not invertible, in particular if $m\neq n$, then $B^*$ is an isometry, if restricted to $\mathrm{im}A$. Indeed, for $x=Ay$, we have
\begin{equation*}
\norm{B^*x}^2=\langle BB^*x,x\rangle=\langle A^*BB^*Ay,y\rangle=\langle A^*Ay,y\rangle=\norm{x}^2.
\end{equation*}
Conversely, if $BB^*=\mathrm{Id}$ on $\mathrm{im}A$, then clearly $A^*BB^*A=A^*A$.  Thus, in Theorem \ref{thm:equality}, we might write that the equivalent conditions are: $B^*A$ is symmetric and positive semi-definite and that $B^*$ is an isometry on $\mathrm{im}A$. 
\end{remark}

\begin{theorem}\label{thm:equalitypq}
Suppose that $m\leq n$ and that $A,B$ are $m\times n$ matrices, $A,B\neq 0$. Let $p,q\in(1,\infty)$ be such that $\frac1p+\frac1q=1$. Then the equality
\begin{equation}\label{eqn:trpq}
\mathrm{tr}(A^*B)=(\mathrm{tr}(|A|^p))^{\frac1p}(\mathrm{tr}(|B|^q))^{\frac1q}
\end{equation}
holds if and only if $B^*A$ is symmetric and positive semi-definite and for some $c>0$,
\begin{equation}\label{eqn:ce}
A^*B=B^*A=c^p(A^*A)^{\frac{p}2}=c^{-q}(B^*B)^{\frac{q}2}.
\end{equation}
Equivalently,
\begin{equation}\label{eqn:equic}
\frac{A^*B}{(\mathrm{tr}(|A|^p))^{\frac1p}(\mathrm{tr}(|B|^q))^{\frac1q}}=\frac{B^*A}{(\mathrm{tr}(|A|^p))^{\frac1p}(\mathrm{tr}(|B|^q))^{\frac1q}}=\frac{(A^*A)^{\frac{p}2}}{\mathrm{tr}(|A|^p)}=\frac{(B^*B)^{\frac{q}2}}{\mathrm{tr}(|B|^q)}.
\end{equation}
\end{theorem}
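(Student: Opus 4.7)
The strategy is to convert the scalar equality conditions of Remark~\ref{rem:eqholp} into operator identities using the singular-value data from Lemma~\ref{lem:basis}. By that remark, equality in (\ref{eqn:trpq}) is equivalent to the existence of constants $\alpha,\beta$ such that (\ref{eqn:equalty0}) and (\ref{eqn:equality}) hold for all indices. I would first rule out degenerate values: if $\beta=0$ then (\ref{eqn:equalty0}) forces $\langle e_i,g_j\rangle=0$ whenever $a_i>0$, which is impossible since $(g_j)_j$ is an orthonormal basis of $\mathbb{R}^n$ and $A\neq 0$; and if $\alpha=0$ then every term in $\mathrm{tr}(A^*B)=\sum_{i,j}a_ib_j\langle e_i,g_j\rangle\langle f_i,h_j\rangle$ vanishes by (\ref{eqn:equality}), contradicting $\mathrm{tr}(A^*B)>0$ under (\ref{eqn:trpq}).

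Next, I would read (\ref{eqn:equalty0}) in the complementary cases $a_i>0,\ b_j=0$ and $a_i=0,\ b_j>0$ to extract the support facts $a_i>0\Rightarrow e_i\in\mathrm{im}\,B^*$ and $b_j>0\Rightarrow g_j\in\mathrm{im}\,A^*$. These are the lynchpin that allow the action of $A^*B$ on the basis vectors to be computed cleanly. For $b_j>0$, expanding $A^*Bg_j=b_j\sum_i a_i\langle f_i,h_j\rangle e_i$, substituting $\langle f_i,h_j\rangle=\alpha^{-1}\langle e_i,g_j\rangle$ via (\ref{eqn:equality}), then $a_i=\beta^{1/p}b_j^{q-1}$ via (\ref{eqn:equalty0}) on the surviving indices (using $q/p=q-1$), and finally applying $g_j\in\mathrm{im}\,A^*$ to collapse the sum to $g_j$ itself, one obtains $A^*Bg_j=\alpha^{-1}\beta^{1/p}b_j^q g_j=\alpha^{-1}\beta^{1/p}(B^*B)^{q/2}g_j$. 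Since $A^*Bg_j=0$ trivially when $b_j=0$, this identifies $A^*B=\alpha^{-1}\beta^{1/p}(B^*B)^{q/2}$ as operators. The entirely symmetric computation on the basis $(e_i)$ yields $A^*B=\alpha^{-1}\beta^{-1/q}(A^*A)^{p/2}$.

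Taking traces of the two expressions gives the internal identity $\mathrm{tr}|A|^p=\beta\,\mathrm{tr}|B|^q$, and combining this with (\ref{eqn:trpq}) forces $\alpha=1$ and in particular $\beta>0$. Setting $c:=\beta^{-1/(pq)}$ then produces (\ref{eqn:ce}), and the symmetry and positive semi-definiteness of $B^*A=A^*B$ is immediate from the form $c^p(A^*A)^{p/2}$ with $c>0$. The reformulation (\ref{eqn:equic}) follows by dividing (\ref{eqn:ce}) through by $\mathrm{tr}(A^*B)$ and noting $\mathrm{tr}(A^*B)=c^p\,\mathrm{tr}|A|^p=c^{-q}\,\mathrm{tr}|B|^q=(\mathrm{tr}|A|^p)^{1/p}(\mathrm{tr}|B|^q)^{1/q}$. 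For the converse, assuming (\ref{eqn:ce}), the identity
\begin{equation*}
\mathrm{tr}(A^*B)=(\mathrm{tr}(A^*B))^{1/p}(\mathrm{tr}(A^*B))^{1/q}=(c^p\mathrm{tr}|A|^p)^{1/p}(c^{-q}\mathrm{tr}|B|^q)^{1/q}=(\mathrm{tr}|A|^p)^{1/p}(\mathrm{tr}|B|^q)^{1/q}
\end{equation*}
gives (\ref{eqn:trpq}) directly.

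The main obstacle I anticipate lies in the second paragraph: one must carefully track which indices survive the substitutions from (\ref{eqn:equality}) and (\ref{eqn:equalty0}), and show that the residual projections coming from removing indices with $a_i=0$ or $b_j=0$ collapse to the identity on the relevant eigenvector. The support facts are exactly what make this collapse work, and they in turn rely crucially on the strict convexity inherent in $p,q\in(1,\infty)$ (absent in the $p=1,q=\infty$ case treated in Theorem~\ref{thm:equality}).
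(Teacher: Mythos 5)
Your proof is correct and follows essentially the same route as the paper: invoke the equality conditions (\ref{eqn:equalty0}) and (\ref{eqn:equality}) from Remark~\ref{rem:eqholp}, rule out $\alpha=0$ and $\beta=0$, derive the operator identities $B^*A=\alpha^{-1}\beta^{-1/q}(A^*A)^{p/2}$ and $A^*B=\alpha^{-1}\beta^{1/p}(B^*B)^{q/2}$, then use the trace relations together with (\ref{eqn:trpq}) to force $\alpha=1$ and set $c=\beta^{-1/(pq)}$. The only cosmetic difference is that the paper collapses the projection onto $\mathrm{im}A^*$ implicitly via a two-case check on $\langle e_i,g_j\rangle$, whereas you package the same fact as explicit ``support'' statements ($a_i>0\Rightarrow e_i\in\mathrm{im}B^*$, $b_j>0\Rightarrow g_j\in\mathrm{im}A^*$); also note that the ``symmetric computation on $(e_i)$'' naturally produces $B^*A$ rather than $A^*B$ (harmless, as the right-hand side is symmetric), and that $\beta>0$ is already forced by your first-paragraph argument since some $a_i>0$ pairs with a $j$ giving $a_i^p=\beta b_j^q>0$, so it need not be deferred to the end.
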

\begin{proof}
Let us recall -- see (\ref{eqn:equalty0}) and (\ref{eqn:equality}) --  that the equality holds if and only if
\begin{equation}\label{eqn:pq}
(a_i^p-\beta b_j^q)\langle e_i,g_j\rangle=0\text{ for all }i=1,\dotsc,n, j=1,\dotsc,m,
\end{equation}
and
\begin{equation}\label{eqn:11}
(\langle e_i,g_j\rangle-\alpha \langle f_i,h_j\rangle )a_ib_j=0\text{ for all }i=1,\dotsc,n, j=1,\dotsc,m.
\end{equation}
Form (\ref{eqn:pq}) it follows that $\beta\neq 0$. Otherwise $A=0$. Form (\ref{eqn:11}) it follows that $\alpha\neq 0$, otherwise $A^*B=0$. Then, as there is equality in (\ref{eqn:trpq}), we must have $A=0$ or $B=0$, contrary to the assumptions. In what follows we assume therefore that $\alpha\neq 0, \beta\neq 0$.
If $\langle e_i,g_j\rangle\neq 0$, then, by (\ref{eqn:pq}),
\begin{equation*}
b_j=\frac{1}{\beta^{\frac1q}}a_i^{\frac{p}q}
\end{equation*}
and thus, by (\ref{eqn:11}),
\begin{equation*}
\Bigg\langle \frac{a_i^{1+\frac{p}q}}{\alpha \beta^{\frac1q}}e_i-B^*Ae_i,g_j\Bigg\rangle=0.
\end{equation*}
If $\langle e_i,g_j\rangle=0$ then the above formula holds as well, by (\ref{eqn:11}). We infer that for $i=1,\dotsc,n$
\begin{equation*}
B^*Ae_i=\frac{a_i^{1+\frac{p}q}}{\alpha \beta^{\frac1q}}e_i.
\end{equation*}
Analogously we have for $j=1,\dotsc,n$
\begin{equation*}
A^*Bg_j=\frac{\beta^{\frac1p} b_j^{1+\frac{q}p}}{\alpha} g_j.
\end{equation*}
Equivalently
\begin{equation*}
B^*A=\frac1{\alpha \beta^{\frac1q}}(A^*A)^{\frac{p}2}\text{ and }A^*B=\frac{\beta^{\frac1p}}{\alpha}(B^*B)^{\frac{q}2}.
\end{equation*}
It follows that $B^*A$ is symmetric and semi-definite. By (\ref{eqn:trpq}) it is positive semi-definite, so $\alpha>0$, and therefore 
\begin{equation}\label{eqn:check}
\mathrm{tr}(B^*A)=(\mathrm{tr}(B^*A))^{\frac1p}(\mathrm{tr}(A^*B))^{\frac1q}
=\frac1{\alpha}(\mathrm{tr}|A|^p)^{\frac1p}(\mathrm{tr}|B|^q)^{\frac1q}.
\end{equation}
Thus $\alpha=1$ and the equality (\ref{eqn:ce}) holds with $c=\beta^{-\frac1{pq}}$.
Converse implication follows readily, as in (\ref{eqn:check}). Condition (\ref{eqn:equic}) follows from (\ref{eqn:ce}) by taking traces and computing the number $c$.
\end{proof}

\begin{remark}
The following duality formula holds true
\begin{equation*}
\norm{A}_p=\sup\{\mathrm{tr}(AB^*)\mid  \norm{B}_q\leq 1\},
\end{equation*}
where $\frac1p+\frac1q=1$, $p,q\in [1,\infty]$. 
Indeed, take any matrix $A$ and its polar factorisation $A=UD$, where $U$ is a partial isometry and $D$ is symmetric and positive semi-definite such that $U^*U$ is projection onto $\mathrm{im}D$. Set $B=UD^{\frac{p}{q}}$. Then
\begin{equation*}
\norm{A}_p=\norm{D}_p, \norm{B}_q=\norm{D^{\frac{p}{q}}}_q=\norm{D}_p^{\frac{p}q},
\end{equation*}
and 
\begin{equation*}
\langle A,B\rangle=\mathrm{tr}(B^*A)=\mathrm{tr}(D^{1+\frac{p}{q}})=\mathrm{tr}(D^p)=\norm{A}_p\norm{B}_q.
\end{equation*}
If $p=1$, we take $B=U$. If $p=\infty$, let $v$ be a vector such that $Dv=\lambda v$ with $\lambda$ such that $|\lambda|=\norm{D}$.  Take $B=UD'$, where $D'=\frac{\lambda}{\abs{\lambda}} vv^*$.
\end{remark}

\section{Divergence formulation of optimal transport of vector measures}

Here we introduce the aforementioned formulation of optimal transport of vector measures. The following material deals also with absolutely continuous measures. 

\subsection{Duality formula for optimal transport of vector measures}\label{s:dual}

Suppose we are given a Borel vector-valued measure $\mu\in\mathcal{M}(\mathbb{R}^n,\mathbb{R}^m)$ of finite total variation, such that $\mu(\mathbb{R}^n)=0$ and $\mu$ has finite first moments, i.e.
\begin{equation*}
\int_{\mathbb{R}^n}\norm{x}d\norm{\mu}(x)<\infty.
\end{equation*}
Here $\norm{\mu}$ denotes the total variation of $\mu$. In what follows we shall consider the Banach space $\mathcal{C}^1(\mathbb{R}^n,\mathbb{R}^m)$ of continuously differentiable functions with bounded derivative and with a seminorm given by
\begin{equation*}
\norm{u}_{\mathcal{C}^1}=\sup\Big\{\norm{Du(x)}\mid x\in\mathbb{R}^n\Big\}.
\end{equation*}
The seminorm induces a norm on the subspace of functions vanishing at the origin.
By $\mathcal{C}^1_0(\mathbb{R}^n,\mathbb{R}^m)$ we denote the subspace of functions with derivative vanishing at infinity.

\begin{definition}
Suppose that $\mu\in\mathcal{M}(\mathbb{R}^n,\mathbb{R}^m)$ has finite first moments and $\mu(\mathbb{R}^n)=0$. Define
\begin{equation*}
\Xi(\mu)=\{M\in\mathcal{M}(\mathbb{R}^n,\mathbb{R}^{m\times n})\mid  -divM=\mu\}.
\end{equation*}
Here we write 
\begin{equation*}
-div M=\mu
\end{equation*}
if
\begin{equation}\label{eqn:div}
\int_{\mathbb{R}^n}\langle f,d\mu\rangle=\int_{\mathbb{R}^n}\langle Df, dM\rangle
\end{equation}
for all functions $f\in\mathcal{C}^1_0(\mathbb{R}^n,\mathbb{R}^m)$.
For any $\mu$ as above define
\begin{equation*}
I(\mu)=\inf\{\norm{M}_{\mathcal{M}}\mid  M\in\Xi(\mu)\}
\end{equation*}
and
\begin{equation*}
J(\mu)=\sup\Big\{\int_{\mathbb{R}^n}\langle u,d\mu\rangle\mid u\in\mathcal{C}^1(\mathbb{R}^n,\mathbb{R}^m),  \norm{u}_{\mathcal{C}^1}\leq 1\Big\}.
\end{equation*}
\end{definition}
Above we consider the total variation norm $\norm{M}_{\mathcal{M}}$ of a matrix-valued measure $M$ with respect to the Schatten $1$-norm. Total variation measure of $M$ with respect to the Schatten $1$-norm shall be denoted by $\norm{M}_1$. Recall that the Schatten $1$-norm of $A\in\mathbb{R}^{m\times n}$ is defined via the formula
\begin{equation*}
\norm{A}_1=\mathrm{tr}(A^*A)^{\frac12}.
\end{equation*}

\begin{lemma}\label{lem:div}
Suppose that $\mu\in\mathcal{M}(\mathbb{R}^n,\mathbb{R}^m)$ is such that $\mu(\mathbb{R}^n)=0$ and has finite first moments. Let $M\in\mathcal{M}(\mathbb{R}^n,\mathbb{R}^{m\times n})$ satisfy $-divM=\mu$. Then for all maps $f\in\mathcal{C}^1(\mathbb{R}^n,\mathbb{R}^m)$ there is
\begin{equation*}
\int_{\mathbb{R}^n}\langle f,d\mu\rangle=\int_{\mathbb{R}^n}\langle Df,dM\rangle.
\end{equation*}
\end{lemma}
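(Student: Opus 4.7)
The plan is to extend the identity from $\mathcal{C}^1_0(\mathbb{R}^n,\mathbb{R}^m)$ to $\mathcal{C}^1(\mathbb{R}^n,\mathbb{R}^m)$ by truncation, using the fact that $\mu$ has vanishing total mass and finite first moments to control the $\mu$-side, and using that $\|M\|_{\mathcal{M}}$ is finite (so its mass vanishes at infinity) to control the error introduced by cutting off.

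First, because $\mu(\mathbb{R}^n) = 0$, I would subtract the constant $f(0)$ and assume without loss of generality that $f(0) = 0$. The mean-value inequality then gives the linear growth bound $\|f(x)\| \leq \|f\|_{\mathcal{C}^1}\|x\|$, which together with the finite first moment hypothesis ensures that $f$ is $\|\mu\|$-integrable.

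Next, I pick a smooth cutoff function $\phi_k \colon \mathbb{R}^n \to [0,1]$ with $\phi_k = 1$ on $B(0,k)$, $\phi_k = 0$ outside $B(0,2k)$, and $\|D\phi_k\|_\infty \leq C/k$ for some universal $C$. Set $f_k = \phi_k f$. Then $Df_k = \phi_k Df + f \otimes D\phi_k$ is bounded and compactly supported, so $f_k \in \mathcal{C}^1_0(\mathbb{R}^n,\mathbb{R}^m)$, and the hypothesis gives
\begin{equation*}
\int_{\mathbb{R}^n}\langle f_k,d\mu\rangle = \int_{\mathbb{R}^n}\langle Df_k,dM\rangle.
\end{equation*}
It remains to pass to the limit $k \to \infty$ on both sides.

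On the left, $\phi_k f \to f$ pointwise with $\|\phi_k f\| \leq \|f\|_{\mathcal{C}^1}\|x\|$, so dominated convergence (with respect to $\|\mu\|$, using the first moment assumption) yields $\int \langle f_k, d\mu\rangle \to \int \langle f, d\mu\rangle$. On the right, I split $Df_k$ into two pieces. For the principal term, $\phi_k Df \to Df$ pointwise and $\|\phi_k Df\| \leq \|f\|_{\mathcal{C}^1}$, so dominated convergence with respect to the finite measure $\|M\|_1$ gives $\int \langle \phi_k Df, dM\rangle \to \int \langle Df, dM\rangle$. For the error term, on the support of $D\phi_k$, which lies in $\{k \leq \|x\| \leq 2k\}$, the uniform bound
\begin{equation*}
\|f(x) \otimes D\phi_k(x)\| \leq \|f\|_{\mathcal{C}^1}\|x\| \cdot \frac{C}{k} \leq 2C\|f\|_{\mathcal{C}^1}
\end{equation*}
holds, and therefore
\begin{equation*}
\Bigl|\int_{\mathbb{R}^n}\langle f \otimes D\phi_k, dM\rangle\Bigr| \leq 2C\|f\|_{\mathcal{C}^1}\,\|M\|_1\bigl(\{\|x\| \geq k\}\bigr) \xrightarrow{k\to\infty} 0,
\end{equation*}
since $\|M\|_1$ is a finite measure. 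Combining these three limits yields the desired identity.

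The only delicate point is the error term $\int \langle f \otimes D\phi_k, dM\rangle$, since $M$ is not assumed to have finite first moments and so one cannot simply appeal to dominated convergence with a sharp decay. The trick is the scale matching: $D\phi_k$ has magnitude $1/k$ precisely where $\|x\| \sim k$, so the product $f \otimes D\phi_k$ stays uniformly bounded and the integral is forced to zero purely by the tail decay of the finite measure $\|M\|_1$.
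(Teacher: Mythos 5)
Your proof is correct and follows essentially the same approach as the paper's: both reduce to $f(0)=0$ to get linear growth, cut off with a function whose derivative decays like $1/\|x\|$ on the transition region so that $f\otimes D\phi$ stays uniformly bounded, and then exhaust the error by the tail decay of the finite measure $\|M\|_1$ on the $M$-side and the finite-first-moment tail on the $\mu$-side. The paper packages this as an $\epsilon$-estimate with a single cutoff satisfying $\|D\phi(x)\|\leq\frac{1}{1+\|x\|}$ off a large ball, where you instead use a sequence of compactly supported cutoffs with $\|D\phi_k\|_\infty\lesssim 1/k$ and pass to the limit by dominated convergence; this is only a cosmetic reorganization of the same scale-matching idea.
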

\begin{proof}
By the assumption, for any $f_0\in\mathcal{C}^1_0(\mathbb{R}^n,\mathbb{R}^m)$ there is
\begin{equation*}
\int_{\mathbb{R}^n}\langle f_0,d\mu\rangle=\int_{\mathbb{R}^n}\langle Df_0,dM\rangle.
\end{equation*}
Pick $f\in\mathcal{C}^1(\mathbb{R}^n,\mathbb{R}^m)$ and $\epsilon>0$. We may assume that $f(0)=0$, so that for $x\in\mathbb{R}^n$ there is
\begin{equation}\label{eqn:bo}
\norm{f(x)}\leq \norm{f}_{\mathcal{C}^1}\norm{x}.
\end{equation}
 Choose a compact ball $K\subset\mathbb{R}^n$ such that
\begin{equation}\label{eqn:tight}
\int_{K^c}\norm{x}d\norm{\mu}(x)<\epsilon \text{ and }\norm{M}(K^c)<\epsilon.
\end{equation}
Let $\phi\in \mathcal{C}^1_0(\mathbb{R}^n,\mathbb{R})$ be a non-negative function, equal to one on $K$, bounded above by one and such that $\norm{D\phi}(x)\leq\frac{1}{1+\norm{x}}$ for $x\notin K$.
Then $\phi f\in \mathcal{C}^1_0(\mathbb{R}^n,\mathbb{R}^m)$, so 
\begin{equation*}
\int_{\mathbb{R}^n}\langle\phi f,d\mu\rangle=\int_{\mathbb{R}^n}\langle\phi Df,dM\rangle+\int_{\mathbb{R}^n}\langle D\phi f,dM\rangle.
\end{equation*}
Therefore, taking into account that $\phi=1$ on $K$ and $D\phi=0$ on $K$, we get

\begin{align*}
&\int_{\mathbb{R}^n}\langle f,d\mu\rangle-\int_{\mathbb{R}^n}\langle Df,dM\rangle=\\
&=  \int_{K^c}(1-\phi)\langle f,d\mu\rangle-\int_{K^c}(1-\phi)\langle Df,dM\rangle+ \int_{K^c}\langle D\phi f,dM\rangle.
\end{align*}
The first two terms of the right-hand side of the above equality are bounded by $\epsilon \norm{f}_{\mathcal{C}^1}$ each, by (\ref{eqn:bo}) and (\ref{eqn:tight}). The third term is bounded by $\epsilon\norm{f}_{\mathcal{C}^1}$, by the assumption on $\phi$, by (\ref{eqn:bo}) and by (\ref{eqn:tight}). Since $\epsilon>0$ was arbitrary, the left-hand side of the equality is equal to zero. This completes the proof.
\end{proof}

\begin{theorem}\label{thm:duality3}
For any $\mu\in\mathcal{M}(\mathbb{R}^n,\mathbb{R}^m)$ such that $\mu(\mathbb{R}^n)=0$ and with finite first moments we have
\begin{equation*}
I(\mu)=J(\mu).
\end{equation*}
Moreover there exists a $1$-Lipschitz $u_0\colon\mathbb{R}^n\to\mathbb{R}^m$ such that
\begin{equation*}
J(\mu)=\int_{\mathbb{R}^n}\langle u_0,d\mu\rangle,
\end{equation*}
and there exists $M_0\in\Xi(\mu)$ such that
\begin{equation*}
I(\mu)=\norm{M_0}_{\mathcal{M}}.
\end{equation*}
Also,
\begin{equation}\label{eqn:jot}
J(\mu)=\sup\Big\{ \int_{\mathbb{R}^n}\langle f,d\mu\rangle\mid f\in\mathcal{C}^1_0(\mathbb{R}^n,\mathbb{R}^m), \norm{f}_{\mathcal{C}^1}\leq 1\Big\}.
\end{equation}
If $u_0,M_0$ are optimisers such that $u_0$ is differentiable $\norm{M_0}_1$-almost everywhere, then
\begin{equation}\label{eqn:optimal}
\Big\langle Du_0,\frac{dM_0}{d\norm{M_0}_1}\Big\rangle =1
\end{equation}
$\norm{M_0}_1$-almost everywhere.
\end{theorem}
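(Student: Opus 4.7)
\emph{Overall strategy.} The plan is to establish the chain $J_0(\mu) \leq J(\mu) \leq I(\mu) \leq J_0(\mu)$, where $J_0(\mu)$ denotes the supremum in (\ref{eqn:jot}) restricted to $\mathcal{C}^1_0$; all the equalities in the theorem and (\ref{eqn:jot}) then follow at once. The first inequality is trivial. For $J(\mu) \leq I(\mu)$, I take any $u \in \mathcal{C}^1$ with $\norm{u}_{\mathcal{C}^1} \leq 1$ and any $M \in \Xi(\mu)$; Lemma \ref{lem:div} and the operator/Schatten-$1$ inequality (\ref{eqn:oper}) give
\begin{equation*}
\int_{\mathbb{R}^n}\langle u, d\mu\rangle = \int_{\mathbb{R}^n}\langle Du, dM\rangle \leq \int_{\mathbb{R}^n}\norm{Du}\,d\norm{M}_1 \leq \norm{M}_{\mathcal{M}}.
\end{equation*}
Existence of an optimal $u_0$ follows from compactness: pick a maximizing sequence $u_n\in\mathcal{C}^1$ with $\norm{u_n}_{\mathcal{C}^1}\leq 1$, normalize $u_n(0)=0$ (allowed since $\mu(\mathbb{R}^n)=0$), so that $\norm{u_n(x)}\leq\norm{x}$, extract via Arzel\`a--Ascoli a locally uniformly convergent subsequence with $1$-Lipschitz limit $u_0$, and pass to the limit in $\int\langle u_n,d\mu\rangle$ by dominated convergence using the finite first moments of $\mu$.

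\emph{Hard direction via Hahn--Banach.} For $I(\mu)\leq J_0(\mu)$, I would consider the subspace $V=\{Df\colon f\in\mathcal{C}^1_0(\mathbb{R}^n,\mathbb{R}^m)\}$ of the Banach space $\mathcal{C}_0(\mathbb{R}^n,\mathbb{R}^{m\times n})$ equipped with the supremum of the operator norm, and define $\ell\colon V\to\mathbb{R}$ by $\ell(Df)=\int\langle f,d\mu\rangle$. This is well-defined: if $Df\equiv 0$ then $f$ is a constant $c$, whence $\int\langle c,d\mu\rangle=\langle c,\mu(\mathbb{R}^n)\rangle=0$; and since elements of $\mathcal{C}^1_0$ have bounded derivative and therefore grow at most linearly, the finite-first-moment assumption makes the integral meaningful. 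The definition of $J_0$ furnishes $\abs{\ell(Df)}\leq J_0(\mu)\sup_x\norm{Df(x)}$, so Hahn--Banach extends $\ell$ to a continuous linear functional $L$ on all of $\mathcal{C}_0(\mathbb{R}^n,\mathbb{R}^{m\times n})$ with the same norm bound. Riesz representation, together with the operator/Schatten-$1$ duality supplied by Theorem \ref{thm:holder}, then produces $M_0\in\mathcal{M}(\mathbb{R}^n,\mathbb{R}^{m\times n})$ representing $L$ with $\norm{M_0}_{\mathcal{M}}\leq J_0(\mu)$; by construction $-\mathrm{div}\,M_0=\mu$, so $I(\mu)\leq\norm{M_0}_{\mathcal{M}}\leq J_0(\mu)$, closing the chain and certifying $M_0$ as optimal. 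This Hahn--Banach step is the crux of the proof: well-definedness of $\ell$ depends on both $\mu(\mathbb{R}^n)=0$ (to kill constants in $\ker D$) and the finite-first-moment hypothesis (to control the linearly-growing test fields), and Riesz must be invoked in the matrix-valued form with the correct dual pairing.

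\emph{Optimality condition.} Set $\rho=dM_0/d\norm{M_0}_1$, so $\norm{\rho(x)}_1=1$ for $\norm{M_0}_1$-almost every $x$. To obtain the identity $\int\langle u_0,d\mu\rangle=\int\langle Du_0,dM_0\rangle$ for the merely $1$-Lipschitz $u_0$, which is the other technical point, I would mollify $u_0$ into $u_\epsilon\in\mathcal{C}^1$ with $\norm{u_\epsilon}_{\mathcal{C}^1}\leq 1$, apply Lemma \ref{lem:div} to each $u_\epsilon$, and pass to the limit: the left-hand side converges by dominated convergence (local uniform convergence combined with uniform linear growth, tested against the first-moment-integrable $\mu$), while the right-hand side converges via the $\norm{M_0}_1$-a.e.\ differentiability hypothesis on $u_0$ together with the uniform bound $\norm{Du_\epsilon}\leq 1$. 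Granting this identity, the chain
\begin{equation*}
\norm{M_0}_{\mathcal{M}}=I(\mu)=\int\langle Du_0,\rho\rangle\,d\norm{M_0}_1\leq\int\norm{Du_0}\,d\norm{M_0}_1\leq\norm{M_0}_{\mathcal{M}}
\end{equation*}
forces equality throughout, and the pointwise equality case of matrix H\"older in the operator/Schatten-$1$ form yields $\langle Du_0,\rho\rangle=1$ $\norm{M_0}_1$-almost everywhere.
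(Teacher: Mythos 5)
Your argument follows essentially the same route as the paper: Hahn--Banach on the subspace $\{Df : f\in\mathcal{C}^1_0(\mathbb{R}^n,\mathbb{R}^m)\}\subset\mathcal{C}_0(\mathbb{R}^n,\mathbb{R}^{m\times n})$ together with Riesz representation under the operator/Schatten-$1$ pairing to produce the optimal $M_0$, Arzel\`a--Ascoli for the optimal Lipschitz $u_0$, and matrix H\"older with Lemma~\ref{lem:div} for the inequality $J(\mu)\leq I(\mu)$. For the optimality condition you add a mollification argument that the paper leaves implicit; be aware, though, that $\norm{M_0}_1$-a.e.\ differentiability of $u_0$ does not by itself yield $Du_\epsilon\to Du_0$ pointwise $\norm{M_0}_1$-a.e.\ when $M_0$ has a singular part (mollified derivatives converge at Lebesgue points of $Du_0$, which is a different notion), so the limit $\int\langle Du_\epsilon,dM_0\rangle\to\int\langle Du_0,dM_0\rangle$ needs further justification -- a gap that the paper's own terse proof shares.
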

\begin{proof}
On the subspace 
\begin{equation*}
\Big\{Df\mid f\in\mathcal{C}^1_0(\mathbb{R}^n,\mathbb{R}^m)\Big\}
\end{equation*}
of the space $\mathcal{C}_0(\mathbb{R}^n,\mathbb{R}^m)$ of continuous functions vanishing at infinity, define a functional $\Lambda$
by the formula
\begin{equation*}
\Lambda(Df)=\int_{\mathbb{R}^n}\langle f, d\mu\rangle.
\end{equation*}
As $\mu(\mathbb{R}^n)=0$ it is well defined. It is continuous as
\begin{equation*}
\Lambda(Df)=\int_{\mathbb{R}^n}\langle f(x)-f(0), d\mu(x)\rangle=\int_{\mathbb{R}^n}\int_{[0,1]}\langle Df(tx)(x),d\mu(x)\rangle d\lambda(t),
\end{equation*}
and thus $\norm{\Lambda}\leq \int_{\mathbb{R}^n}\norm{x}d\norm{\mu}(x)$.
By the Hahn--Banach theorem it follows that we may extend $\Lambda$ to a continuous linear functional on $\mathcal{C}_0(\mathbb{R}^n,\mathbb{R}^{m\times n})$ preserving its norm.
From Riesz' theorem it follows that the dual space of $\mathcal{C}_0(\mathbb{R}^n,\mathbb{R}^{m\times n})$ is isometrically isomorphic to the space of Borel measures $\mathcal{M}(\mathbb{R}^n,\mathbb{R}^{m\times n})$. We obtain a matrix-valued measure $M_0\in\mathcal{M}(\mathbb{R}^n,\mathbb{R}^{m\times n})$ such that
\begin{equation*}
\Lambda(Df)=\int_{\mathbb{R}^n}\langle Df,dM_0\rangle
\end{equation*}
and such that $\norm{M_0}_{\mathcal{M}}=\norm{\Lambda}$. Moreover 
\begin{align*}
\norm{\Lambda}&=\sup\Big\{\Lambda(Df)\mid f\in \mathcal{C}_0^1(\mathbb{R}^n,\mathbb{R}^m), \norm{f}_{\mathcal{C}^1}\leq 1\Big\}=\\&=\sup\Big\{ \int_{\mathbb{R}^n}\langle f,d\mu\rangle\mid f\in\mathcal{C}_0^1(\mathbb{R}^n,\mathbb{R}^m), \norm{f}_{\mathcal{C}^1}\leq 1\Big\}\leq J(\mu).
\end{align*}
Observe that, by Theorem \ref{thm:holder} and Lemma \ref{lem:div},
\begin{equation}\label{eqn:ineq}
J(\mu)\leq I(\mu).
\end{equation}
As we have just found a measure $M_0$ with $\norm{M_0}_{\mathcal{M}}\leq J(\mu)$ we see that there holds equality in (\ref{eqn:ineq}).

We shall now show that there exists a $1$-Lipschitz map $u_0$ such that
\begin{equation*}
\int_{\mathbb{R}^n}\langle u_0,d\mu\rangle=\sup\Big\{\int_{\mathbb{R}^n}\langle u,d\mu\rangle\mid f\in\mathcal{C}^1(\mathbb{R}^n,\mathbb{R}^m), \norm{f}_{\mathcal{C}^1}\leq 1\Big\}.
\end{equation*}
For this, we may use the Arzel\`a--Ascoli theorem. Choose a sequence 
\begin{equation*}
(u_k)_{k=1}^{\infty}\in \mathcal{C}^1(\mathbb{R}^n,\mathbb{R}^m), 
\end{equation*}
with $\norm{u_k}_{\mathcal{C}^1}\leq 1$, $u_k(0)=0$ and such that
\begin{equation*}
\lim_{k\to\infty}\int_{\mathbb{R}^n}\langle u_k,d\mu\rangle=J(\mu).
\end{equation*}
We may extract a subsequence $(u_{k_l})_{l=1}^{\infty}$ that converges locally uniformly to a $1$-Lipschitz function $u_0$. We claim that 
\begin{equation}\label{eqn:eqn}
\int_{\mathbb{R}^n}\langle u_0,d\mu\rangle=J(\mu).
\end{equation}
Choose $\epsilon>0$ and a compact set $K\subset\mathbb{R}^n$ such that 
\begin{equation*}
\int_{K^c}\norm{x}d\norm{\mu}(x)<\epsilon.
\end{equation*}
If $\norm{u_{k_l}(x)-u_0(x)}<\epsilon$ for all $l\geq N$ and all $x\in K$, then
\begin{equation*}
\Big | \int_{\mathbb{R}^n}\langle u_0,d\mu\rangle-\int_{\mathbb{R}^n}\langle u_{k_l},d\mu\rangle\Big | \leq 2 \int_{K^c}\norm{x}d\norm{\mu}(x)+\epsilon \norm{\mu}(K).
\end{equation*}
Letting $l$ tend to infinity and then $\epsilon$ to zero we get (\ref{eqn:eqn}).

For the last part of the theorem observe that if $-divM=\mu$ and $u_0$ is differentiable $\norm{M}_1$-almost everywhere, with $\norm{Du_0}$ bounded above by one, $\norm{M}_1$-almost everywhere, then by matrix H\"older's inequality (see Section \ref{s:mh})
\begin{equation*}
\Big\langle Du_0,\frac{dM}{d\norm{M}_1}\Big\rangle \leq 1.
\end{equation*}
If $M_0$ and $u_0$ are the optimisers then integrating the inequality with respect to $d\norm{M_0}_1$, we would get $J(\mu)\leq I(\mu)$. Hence equality holds if and only if (\ref{eqn:optimal}) holds.
\end{proof}

\begin{remark}\label{rem:Kanto}
The problem of finding optimal measure $M_0$ is a relaxation of the optimal transport of vector measures, as formulated in \cite{Ciosmak2}. In this problem there holds the Kantorovich--Rubinstein duality. That is
\begin{equation*}
\inf\Big\{\int_{\mathbb{R}^n\times\mathbb{R}^n}\norm{x-y}d\norm{\pi}(x,y)\mid \mathrm{P}_1\pi-\mathrm{P}_2\pi=\mu\Big\},
\end{equation*}
where $\mathrm{P}_1\pi,\mathrm{P}_2\pi$ denote the first and the second marginal of $\pi$ respectively, is equal to 
\begin{equation*}
\sup\Big\{\int_{\mathbb{R}^n}\langle f,d\mu\rangle\mid f\colon\mathbb{R}^n\to\mathbb{R}^m\text{ is }1\text{-Lipschitz}\Big\}.
\end{equation*}
Using convolution with a smoothing kernel we may show that the right-hand side of the above equality is equal to 
\begin{equation*}
\sup\Big\{\int_{\mathbb{R}^n}\langle f,d\mu\rangle\mid f\in\mathcal{C}^1(\mathbb{R}^n,\mathbb{R}^m), \norm{f}_{\mathcal{C}^1}\leq 1\Big\}.
\end{equation*}
\end{remark}

\subsection{Absolutely continuous vector measures}\label{s:abs}

We shall be now interested in measures on open, bounded, connected subsets $\Omega\subset\mathbb{R}^n$ with Lipschitz boundary $\partial\Omega$. Let us now restrict to consideration of matrix-valued measures $M\in\mathcal{M}(\Omega,\mathbb{R}^{m\times n})$  that are absolutely continuous with respect to the Lebesgue measure $\lambda$. Let $p,q\in[1,\infty]$ be such that $\frac1p+\frac1q=1$. Let $h\in L^q(\Omega,\mathbb{R}^m)$
be such that
\begin{equation*}
\int_{\Omega}hd\lambda=0.
\end{equation*}
Let us define
\begin{equation*}
\Gamma_{\lambda}^q(h)=\Big\{H\in L^q(\Omega,\mathbb{R}^{m\times n})\mid -divH=h\Big\}.
\end{equation*}
For $H\in L^q(\Omega,\mathbb{R}^{m\times n})$ we say that $-divH=h$ with no-flux boundary conditions if
\begin{equation}\label{eqn:ediv}
\int_{\Omega}\langle Df,H\rangle d\lambda=\int_{\Omega}\langle f,h\rangle d\lambda,
\end{equation}
for all  functions $f\in\mathcal{W}^{1,p}(\Omega,\mathbb{R}^m)$. 
The above condition is equivalent to demanding that the negative divergence of $Hd\lambda$ is equal to $hd\lambda$ in the sense of Section \ref{s:dual} and that the rows of $H$ are tangent to the boundary of $\Omega$.

Likewise, we could consider condition (\ref{eqn:ediv}) for all functions $f\in\mathcal{W}_0^{1,p}(\Omega,\mathbb{R}^m)$. Then it would be equivalent to demanding that the negative divergence of $Hd\lambda$ is equal to $hd\lambda$ in the sense of Section \ref{s:dual}.

Here, $\mathcal{W}_0^{1,p}(\Omega,\mathbb{R}^m)$
denotes the closure of smooth, compactly supported functions in $\mathcal{W}^{1,p}(\Omega,\mathbb{R}^m)$.

Let us recall that $\mathcal{W}^{1,p}(\Omega,\mathbb{R}^m)$ denotes the Sobolev space of Borel measurable functions $f$ on $ \Omega$ that  admit a weak derivative $Df$ and with finite norm given by the formula
\begin{equation*}
\Big(\int_{\Omega} \mathrm{tr}(|Df|^p)d\lambda+\int_{\Omega}\norm{f}^pd\lambda\Big)^{\frac1p}
\end{equation*}
Recall that $\mathrm{tr}(|G|^p)^{\frac1p}$ for a matrix $G\in\mathbb{R}^{m\times n}$ is called the Schatten $p$-norm. 

Let us recall that the Poincar\'e inequality (see e.g. \cite{Evans2, Evans}) states that for any $p\in [1,\infty]$ there exists a constant $C$, which depends on $\Omega$ and $n,p$, such that for any map $f\in\mathcal{W}^{1,p}(\Omega,\mathbb{R}^m)$ there is
\begin{equation*}
\int_{\Omega}\Big\lVert f -\frac{\int_{\Omega} fd\lambda}{\lambda(\Omega)}\Big\rVert^p d\lambda\leq C\int_{\Omega} \mathrm{tr}(|Df|^p)d\lambda.
\end{equation*}
For $f\in \mathcal{W}^{1,p}(\Omega,\mathbb{R}^m)$ we shall write
\begin{equation*}
\norm{Df}_{L^p}=\Big(\int_{\Omega} \mathrm{tr}(|Df|^p)d\lambda\Big)^{\frac1p}.
\end{equation*}
%
%
Set
\begin{equation*}
I_{\lambda}^q(h)=\inf\Big\{\norm{H}_{L^q}\mid H\in\Gamma_{\lambda}^q(h)\Big\}.
\end{equation*}
By $\norm{H}_{L^q}$ we mean here the $L^q(\Omega,\mathbb{R}^{m\times n})$ norm with respect to the Schatten $q$-norm on $\mathbb{R}^{m\times n}$, i.e.
\begin{equation*}
\norm{H}_{L^q}=\Big(\int_{\mathbb{R}^n}\mathrm{tr}(|H|^q)d\lambda\Big)^{\frac1q}.
\end{equation*}
We set
\begin{equation*}
J_{\lambda}^p(h)=\sup\Big\{\int_{\mathbb{R}^n}\langle f,h\rangle d\lambda\mid f\in\mathcal{W}^{1,p}(\Omega,\mathbb{R}^m), \norm{Df}_{L^p}\leq 1\Big\}.
\end{equation*}

\begin{theorem}\label{thm:dualityLebesgue}
Let $p,q\in(1,\infty)$ be such that $\frac1p+\frac1q=1$. Let $\Omega\subset\mathbb{R}^n$ be a bounded, open and connected set with Lipschitz boundary. Let
\begin{equation*}
h\in L^q(\Omega,\mathbb{R}^m)
\end{equation*}
be such that
\begin{equation*}
\int_{\Omega}hd\lambda=0.
\end{equation*}
Then
\begin{equation*}
J_{\lambda}^q(h)=I_{\lambda}^q(h).
\end{equation*}
Moreover there exists $f_0\in\mathcal{W}^{1,p}(\Omega,\mathbb{R}^m)$, $\norm{Df_0}_{L^p}\leq1$, such that
\begin{equation*}
J_{\lambda}^q(h)=\int_{\Omega}\langle f_0,h\rangle d\lambda,
\end{equation*}
and there exists $H_0\in\Gamma_{\lambda}^q(h)$ such that
\begin{equation*}
I_{\lambda}^q(h)=\norm{H_0}_{L^q}.
\end{equation*}
Moreover, if $h\neq 0$, then $f_0,H_0$ are optimisers if and only if
\begin{equation}\label{eqn:optifun}
\frac{Df_0^*H_0}{\norm{H_0}_{L^q}}=\frac{H_0^*Df_0}{\norm{H_0}_{L^q}}=\frac{|H_0|^q}{\norm{H_0}_{L^q}^q}=|Df_0|^p
\end{equation}
$\lambda$-almost everywhere.

\end{theorem}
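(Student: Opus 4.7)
My plan is as follows. First I would establish the easy direction $J_\lambda^q(h)\leq I_\lambda^q(h)$: for any feasible pair $(f,H)$ with $\|Df\|_{L^p}\leq 1$ and $H\in\Gamma_\lambda^q(h)$, the defining identity $\int_\Omega\langle f,h\rangle d\lambda=\int_\Omega\langle Df,H\rangle d\lambda$ holds (since $f\in\mathcal{W}^{1,p}$ is itself an admissible test function); then applying matrix H\"older's inequality (Theorem \ref{thm:holder}) pointwise to the integrand, followed by the scalar H\"older inequality in the integral, gives $\int\langle f,h\rangle d\lambda\leq\|Df\|_{L^p}\|H\|_{L^q}\leq\|H\|_{L^q}$. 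Taking supremum and infimum yields the inequality.

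For the reverse direction together with existence of $H_0$, I would adapt the Hahn--Banach route of Theorem \ref{thm:duality3} to the $L^p$ setting. On the subspace $V=\{Df:f\in\mathcal{W}^{1,p}(\Omega,\mathbb{R}^m)\}\subseteq L^p(\Omega,\mathbb{R}^{m\times n})$ define $\Lambda(Df)=\int_\Omega\langle f,h\rangle d\lambda$. This is well-defined since $\Omega$ is connected (so two representatives of $Df$ differ by a constant) and $\int_\Omega h\,d\lambda=0$. Replacing $f$ by $f-(f)_\Omega$ and using the Poincar\'e inequality yields $|\Lambda(Df)|\leq C\|h\|_{L^q}\|Df\|_{L^p}$, so $\Lambda$ is bounded. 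Extending $\Lambda$ by Hahn--Banach to $L^p(\Omega,\mathbb{R}^{m\times n})$ without increasing its norm and invoking $L^p$--$L^q$ duality (valid since $p\in(1,\infty)$) produces $H_0\in L^q(\Omega,\mathbb{R}^{m\times n})$ representing the extension. Because the test functions run over all of $\mathcal{W}^{1,p}$, this says $-\mathrm{div}\,H_0=h$ in the no-flux sense, so $H_0\in\Gamma_\lambda^q(h)$; moreover $\|H_0\|_{L^q}=\|\Lambda\|_V=J_\lambda^q(h)$, whence $I_\lambda^q(h)\leq\|H_0\|_{L^q}=J_\lambda^q(h)$ and $H_0$ is optimal.

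Existence of $f_0$ I would obtain by the direct method. Pick a maximizing sequence $(f_k)$ with $(f_k)_\Omega=0$ (the mean is irrelevant since $\int h=0$) and $\|Df_k\|_{L^p}\leq 1$; the Poincar\'e inequality gives uniform $\mathcal{W}^{1,p}$-bounds. Reflexivity of $\mathcal{W}^{1,p}$ for $p\in(1,\infty)$ yields a weakly convergent subsequence $f_{k_l}\rightharpoonup f_0$; weak continuity of $f\mapsto\int\langle f,h\rangle d\lambda$ (since $h\in L^q$) together with weak lower semicontinuity of $f\mapsto\|Df\|_{L^p}$ identifies $f_0$ as an optimizer with $\|Df_0\|_{L^p}\leq 1$.

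For the characterization of optimizers I would expand the chain
\begin{equation*}
\|H_0\|_{L^q}=\int\langle f_0,h\rangle d\lambda=\int\langle Df_0,H_0\rangle d\lambda\leq\int(\mathrm{tr}|Df_0|^p)^{1/p}(\mathrm{tr}|H_0|^q)^{1/q}\,d\lambda\leq\|Df_0\|_{L^p}\|H_0\|_{L^q}\leq\|H_0\|_{L^q},
\end{equation*}
which must hold with equality throughout. Scalar H\"older equality forces $\mathrm{tr}|Df_0|^p$ and $\mathrm{tr}|H_0|^q$ proportional a.e., and $\|H_0\|_{L^q}>0$ (since $h\neq 0$) forces $\|Df_0\|_{L^p}=1$, fixing the constant to $\|H_0\|_{L^q}^{-q}$. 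Pointwise matrix H\"older equality, invoked through Theorem \ref{thm:equalitypq} and in particular identity (\ref{eqn:equic}), yields symmetry and positive semidefiniteness of $Df_0^*H_0$ together with the threefold proportionality which, after multiplying by the normalizing scalar just computed, is exactly (\ref{eqn:optifun}). The converse is a direct computation: assuming (\ref{eqn:optifun}), taking traces and integrating gives $\|Df_0\|_{L^p}=1$, and then $\int\langle f_0,h\rangle d\lambda=\int\langle Df_0,H_0\rangle d\lambda=\|H_0\|_{L^q}\int\mathrm{tr}|Df_0|^p\,d\lambda=\|H_0\|_{L^q}$, which by the established duality forces both to be optimizers. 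I expect the main technical obstacle to be the bookkeeping in this final step, namely threading the pointwise identities from Theorem \ref{thm:equalitypq} through the integral H\"older equality case and keeping track of the normalizing constants; the Hahn--Banach and weak compactness steps are standard.
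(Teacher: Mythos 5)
Your proposal is correct and follows essentially the same route as the paper: Hahn--Banach together with $L^p$--$L^q$ duality and the Poincar\'e inequality for the existence of $H_0$ and the equality $I_\lambda^q(h)=J_\lambda^q(h)$; reflexivity of the Sobolev space and the direct method for the existence of $f_0$; and the combination of the scalar integral H\"older equality case with the pointwise matrix H\"older equality case (Theorem \ref{thm:equalitypq}) for the optimality characterisation. The only (minor) difference is that you explicitly normalise the maximising sequence to have zero mean before invoking Poincar\'e and weak compactness, a detail the paper leaves implicit.
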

\begin{proof}
Let us observe that if $h\in L^q(\Omega,\mathbb{R}^m)$, then $h\in L^1(\Omega,\mathbb{R}^m)$, as $\Omega$ is bounded. 
Thus the integral $\int_{\Omega}hd\lambda$ is meaningful.
Recall that the dual space of $L^p(\Omega,\mathbb{R}^{m\times n})$ is isometrically isomorphic to $L^q(\Omega,\mathbb{R}^{m\times n})$. Consider a subspace $V$ of $L^p(\Omega,\mathbb{R}^{m\times n})$ given by
\begin{equation*}
V=\Big\{Df\mid f\in\mathcal{W}^{1,p}(\Omega,\mathbb{R}^m)\Big\}
\end{equation*}
and a functional $\Lambda \colon V\to\mathbb{R}$ given by $\Lambda(Df)=\int_{\Omega}\langle f,h\rangle d\lambda$. By the assumptions, it is well-defined and finite. For any function $f\in\mathcal{W}^{1,p}(\Omega,\mathbb{R}^m)$ we have, by the Poincar\'e inequality,
\begin{equation*}
\Lambda(Df)=\int_{\Omega}\Big\langle f-\frac{\int_{\Omega}fd\lambda}{\lambda(\Omega)},h\Big\rangle d\lambda\leq C\norm{h}_{L^q} \norm{Df}_{L^p}.
\end{equation*}
Thus
\begin{equation*}
\norm{\Lambda}\leq C\norm{h}_{L^q}.
\end{equation*}
By the Hahn--Banach theorem we may extend $\Lambda$ to a functional $\Lambda_0$ defined on the space 
 $L^p(\Omega,\mathbb{R}^{m\times n})$ and thus obtain a matrix-valued function $H_0\in L^q(\Omega,\mathbb{R}^{m\times n})$ such that
\begin{equation*}
\int_{\Omega}\langle f, h\rangle d\lambda=\int_{\Omega}\langle Df, H_0\rangle d\lambda
\end{equation*}
for any $f\in\mathcal{W}^{1,p}(\Omega,\mathbb{R}^m)$. Moreover
\begin{equation*}
\norm{H_0}_{L^q}=\norm{\Lambda_0}=\norm{\Lambda}.
\end{equation*}
Since, by Theorem \ref{thm:holder}, $J_{\lambda}^q(h)\leq I_{\lambda}^q(h)$, it follows that
\begin{equation*}
J_{\lambda}^q(h)=\norm{\Lambda}=\norm{H_0}_{L^q}=I_{\lambda}^q(h).
\end{equation*}
We shall now show that there exists $f\in\mathcal{W}^{1,p}(\Omega,\mathbb{R}^m)$, $\norm{Df_0}_{L^p}\leq 1$, with
\begin{equation*}
\int_{\Omega}\langle f_0,h\rangle d\lambda=J_{\lambda}^q(h).
\end{equation*}

Since the Sobolev space $\mathcal{W}^{1,p}(\Omega,\mathbb{R}^m)$ is reflexive, the unit ball in $\mathcal{W}^{1,p}(\Omega,\mathbb{R}^m)$ is weakly compact. The functional
\begin{equation*}
f\mapsto \int_{\Omega}\langle f,h\rangle d\lambda
\end{equation*}
is continuous, as was shown above. By compactness, there exists $f_0$ that maximises its value over the unit ball. This is the function that we were looking for.

For the equality
\begin{equation*}
\int_{\Omega}\langle Df_0,H_0\rangle d\lambda=\norm{H_0}_{L^q}=\Big(\int_{\Omega}\mathrm{tr}(|H_0|^q)\rangle d\lambda\Big)^{\frac1q}
\end{equation*}
to hold true, it is sufficient and necessary that 
\begin{equation*}
\mathrm{tr}(Df_0^*H_0)=\big(\mathrm{tr}(|Df_0|^p)\big)^{\frac1p}\big(\mathrm{tr}(|H_0|^q)\big)^{\frac1q}
\end{equation*}
$\lambda$-almost everywhere and that there exists non-negative constant $c$ such that
\begin{equation*}
\mathrm{tr}(|H_0|^q)=c\mathrm{tr}(|Df_0|^p)
\end{equation*}
$\lambda$-almost everywhere. In view of Theorem \ref{thm:equalitypq} we see that these two conditions may be equivalently stated as
\begin{equation*}
Df_0^*H_0=H_0^*Df_0=d^q|H_0|^q=\frac1{d^p}|Df_0|^p
\end{equation*}
$\lambda$-almost everywhere, for some non-negative constant $d$. If $h\neq 0$, then $H_0\neq 0$ and  also $\norm{Df_0}_{L^p}=1$. Integrating yields
\begin{equation*}
d\norm{H_0}_{L^q}^{\frac1p}=1.
\end{equation*}
It follows that the equivalent condition is that $\lambda$-almost everywhere there is
\begin{equation*}
Df_0^*H_0=H_0^*Df_0=\frac{|H_0|^q}{\norm{H_0}_{L^q}^{\frac{q}{p}}}=\norm{H_0}_{L^q}|Df_0|^p.
\end{equation*}
We infer that (\ref{eqn:optifun}) holds true.
\end{proof}

\begin{remark}\label{rem:possibleflux}
In the definition of divergence of $H\in L^q(\Omega,\mathbb{R}^{m\times n})$ we could have asked for the equality (\ref{eqn:ediv}) to hold true for all $f\in\mathcal{W}^{1,p}_0(\Omega,\mathbb{R}^m)$. 
Then the above theorem holds true as well, with $\mathcal{W}^{1,p}(\Omega,\mathbb{R}^m)$ in lieu of $\mathcal{W}^{1,p}_0(\Omega,\mathbb{R}^m)$. The proof would have to be slightly modified -- we would need to observe that the subspace $\mathcal{W}^{1,p}_0(\Omega,\mathbb{R}^m)\subset \mathcal{W}^{1,p}(\Omega,\mathbb{R}^m)$ is closed, hence reflexive.
\end{remark}
\section{Representation formulae for polar cones}

In this section we shall employ the results of the previous sections. The applications include representation formulae for the dual cone to monotone maps and representation formulae for polar cones to tangent cones to unit ball in space of continuously differentiable maps $\mathcal{C}^1(\Omega,\mathbb{R}^m)$ and in the Sobolev space $\mathcal{W}^{1,p}_0(\Omega,\mathbb{R}^m)$, with $p\in (1,\infty)$ and $\Omega\subset\mathbb{R}^n$ a suitable open set.

\subsection{Dual cone of set of monotone maps}\label{s:polar}

We shall now provide a more general,  alternative proof of the representation formula for the elements of the dual cone of the set of monotone maps. This representation formula is already proven in \cite{Cavalletti4}. Let us remark that the proof is alternative, but in essence both proofs rely on the Hahn--Banach theorem. The new ingredient is employment of matrix H\"older's inequality and Proposition \ref{pro:monotone}.

\begin{definition}
We shall say that a map $u\colon\mathbb{R}^n\to\mathbb{R}^n$ is monotone if
\begin{equation*}
\langle u(x)-u(y),x-y\rangle \geq 0
\end{equation*}
for all $x,y\in\mathbb{R}^n$. 
\end{definition}

\begin{proposition}\label{pro:mono}
A map $u\in\mathcal{C}^1(\mathbb{R}^n,\mathbb{R}^n)$ is monotone if and only if 
\begin{equation}\label{eqn:mono}
\langle Du(x)v,v\rangle\geq 0
\end{equation}
for all $x,v\in\mathbb{R}^n$.
\end{proposition}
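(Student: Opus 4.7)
The plan is to prove the two directions separately by standard arguments relating a $\mathcal{C}^1$ map to its derivative along line segments.

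For the forward direction, I would assume $u$ is monotone, fix arbitrary $x, v \in \mathbb{R}^n$ and $t > 0$, and apply the monotonicity inequality to the pair $(x+tv, x)$. This gives $\langle u(x+tv) - u(x), tv\rangle \geq 0$, and after dividing by $t^2$ and letting $t \to 0^+$, differentiability of $u$ at $x$ yields $\langle Du(x)v, v\rangle \geq 0$. Since $x, v$ were arbitrary, (\ref{eqn:mono}) holds.

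For the converse, suppose $\langle Du(x)v,v\rangle \geq 0$ for every $x, v$. Fix $x, y \in \mathbb{R}^n$ and consider the scalar function $\varphi\colon [0,1] \to \mathbb{R}$ defined by $\varphi(t) = \langle u(x+t(y-x)), y-x\rangle$. Since $u \in \mathcal{C}^1$, $\varphi$ is continuously differentiable with
\begin{equation*}
\varphi'(t) = \langle Du(x+t(y-x))(y-x), y-x\rangle \geq 0
\end{equation*}
by hypothesis. The fundamental theorem of calculus then gives
\begin{equation*}
\langle u(y) - u(x), y-x\rangle = \varphi(1) - \varphi(0) = \int_0^1 \varphi'(t)\, dt \geq 0,
\end{equation*}
which is precisely the monotonicity of $u$.

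There is no substantial obstacle here; the only mild care needed is in the forward direction to justify passing to the limit in $t$, which is immediate from the definition of the Fréchet derivative applied to the difference quotient $t^{-1}(u(x+tv) - u(x)) \to Du(x)v$.
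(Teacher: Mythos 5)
Your proof is correct and is exactly the standard argument that the paper leaves implicit (the paper's proof of this proposition is simply ``The proof follows readily''). Both directions are handled cleanly: the forward direction via the difference quotient along the segment $x, x+tv$, and the converse via the fundamental theorem of calculus applied to $\varphi(t)=\langle u(x+t(y-x)),y-x\rangle$.
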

\begin{proof}
The proof follows readily.
\end{proof}
\begin{remark}
The condition (\ref{eqn:mono}) means that at any point $x\in\mathbb{R}^n$ the matrix $Du(x)$ is positive semi-definite.
\end{remark}
We shall now study the dual cone of the set of monotone maps, that is the set
\begin{equation*}
\mathcal{P}=\Big\{\mu\in\mathcal{M}(\mathbb{R}^n,\mathbb{R}^n)\mid \int_{\mathbb{R}^n}\langle u,d\mu\rangle\geq  0 \text{ for any monotone }u\in\mathcal{C}^1(\mathbb{R}^n,\mathbb{R}^n) \Big\}.
\end{equation*}
Here $\mathcal{M}(\mathbb{R}^n,\mathbb{R}^n)$ is the space of all $\mathbb{R}^m$-valued measures.

This readily implies also a representation formula for the polar cone of the set of monotone maps, that is the cone
\begin{equation*}
-\mathcal{P}=\Big\{\mu\in\mathcal{M}(\mathbb{R}^n,\mathbb{R}^n)\mid \int_{\mathbb{R}^n}\langle u,d\mu\rangle\leq  0 \text{ for any monotone }u\in\mathcal{C}^1(\mathbb{R}^n,\mathbb{R}^n) \Big\}.
\end{equation*}

For $\mu\in\mathcal{M}(\mathbb{R}^n,\mathbb{R}^n)$, recall the definition of $J(\mu)$; see Section \ref{s:dual}.

\begin{proposition}\label{pro:monotone}
Suppose that $\mu\in \mathcal{M}(\mathbb{R}^n,\mathbb{R}^n)$ is such that $\mu(\mathbb{R}^n)=0$ and has finite first moments. Then $\mu\in \mathcal{P}$ if and only if
\begin{equation}\label{eqn:maxi}
\int_{\mathbb{R}^n}\langle x,d\mu(x)\rangle=J(\mu).
\end{equation}
\end{proposition}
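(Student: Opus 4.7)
Both directions use the identity map as the bridge between monotonicity and $J(\mu)$. Observe first that $u_{\mathrm{id}}(x)=x$ belongs to $\mathcal{C}^1(\mathbb{R}^n,\mathbb{R}^n)$ with $\|u_{\mathrm{id}}\|_{\mathcal{C}^1}=1$, and is monotone since $Du_{\mathrm{id}}=I$. Consequently, for any $\mu\in\mathcal{P}$ with $\mu(\mathbb{R}^n)=0$ and finite first moments, one has $\int\langle x,d\mu(x)\rangle\leq J(\mu)$ just from the supremum definition.

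For the direction $(\Rightarrow)$, I would prove the reverse inequality $J(\mu)\leq \int\langle x,d\mu\rangle$ by a perturbation trick. Given any $f\in\mathcal{C}^1(\mathbb{R}^n,\mathbb{R}^n)$ with $\|f\|_{\mathcal{C}^1}\leq 1$, set $u(x)=x-f(x)$. Then $u\in\mathcal{C}^1(\mathbb{R}^n,\mathbb{R}^n)$, and for every $x,v\in\mathbb{R}^n$, Cauchy--Schwarz gives
\begin{equation*}
\langle Du(x)v,v\rangle=\|v\|^2-\langle Df(x)v,v\rangle\geq \|v\|^2-\|Df(x)v\|\,\|v\|\geq 0,
\end{equation*}
so $u$ is monotone by Proposition \ref{pro:mono}. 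Both $x$ and $f$ are integrable against $\|\mu\|$ (the first by assumption on first moments, the second because $\|f(x)\|\leq \|f(0)\|+\|x\|$). Hence $\mu\in\mathcal{P}$ implies $\int\langle x-f,d\mu\rangle\geq 0$, i.e.\ $\int\langle f,d\mu\rangle\leq \int\langle x,d\mu\rangle$, and passing to the supremum over $f$, invoking (\ref{eqn:jot}) in Theorem \ref{thm:duality3}, yields the claim.

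For the direction $(\Leftarrow)$, the plan is to extract structural information about the optimiser $M_0$. By Theorem \ref{thm:duality3}, there exists $M_0\in\Xi(\mu)$ with $\|M_0\|_{\mathcal{M}}=I(\mu)=J(\mu)$. Applying Lemma \ref{lem:div} to the identity map,
\begin{equation*}
\int_{\mathbb{R}^n}\langle x,d\mu(x)\rangle=\int_{\mathbb{R}^n}\langle I,dM_0\rangle=\int_{\mathbb{R}^n}\mathrm{tr}(dM_0),
\end{equation*}
while $\|M_0\|_{\mathcal{M}}=\int \mathrm{tr}(|dM_0|)$ by definition of the Schatten $1$-total variation. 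Under the assumption $\int\langle x,d\mu\rangle=J(\mu)=\|M_0\|_{\mathcal{M}}$ the two traces agree, so pointwise equality $\mathrm{tr}(A)=\mathrm{tr}(|A|)$ must hold $\|M_0\|_1$-almost everywhere for $A=dM_0/d\|M_0\|_1$. By the equality analysis in Example \ref{exa:identity}, this forces $A$ to be symmetric and positive semidefinite $\|M_0\|_1$-almost everywhere.

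Finally, pick any monotone $u\in\mathcal{C}^1(\mathbb{R}^n,\mathbb{R}^n)$. Lemma \ref{lem:div} gives
\begin{equation*}
\int_{\mathbb{R}^n}\langle u,d\mu\rangle=\int_{\mathbb{R}^n}\langle Du(x),A(x)\rangle\,d\|M_0\|_1(x).
\end{equation*}
Decompose $Du=S+T$ into its symmetric and antisymmetric parts; by Proposition \ref{pro:mono} the symmetric part $S$ is positive semidefinite. Since $A$ is symmetric, $\mathrm{tr}(TA^*)=\mathrm{tr}(TA)=0$, and since $S$ and $A$ are both symmetric positive semidefinite, $\mathrm{tr}(SA)\geq 0$ at $\|M_0\|_1$-almost every point. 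Integrating, $\int\langle u,d\mu\rangle\geq 0$, whence $\mu\in\mathcal{P}$. The only subtle point is the second step: one must verify that the scalar identity $\int\mathrm{tr}(dM_0)=\int\mathrm{tr}(|dM_0|)$ upgrades to pointwise symmetry-and-positivity of the density, and this is exactly where the equality case of the matrix H\"older-type inequality from Example \ref{exa:identity} is essential.
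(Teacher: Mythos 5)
Your $(\Rightarrow)$ direction — perturbing the identity by an arbitrary $1$-Lipschitz $f$, checking $\mathrm{id}-f$ is monotone via Cauchy--Schwarz, and taking a supremum — is exactly the paper's argument for that implication (the appeal to (\ref{eqn:jot}) is superfluous, since $J(\mu)$ is already defined as a supremum over $\mathcal{C}^1$ maps with norm at most~$1$). Your $(\Leftarrow)$ direction is correct but takes a genuinely different, and heavier, route. The paper stays at the level of the definition of $J(\mu)$: for a monotone $u$ it forms $f(x)=x-\epsilon u(x)$, bounds $\norm{f}_{\mathcal{C}^1}\le\sqrt{1+\epsilon^2\norm{u}_{\mathcal{C}^1}^2}$ by the same Cauchy--Schwarz computation, rescales $f$ to be admissible in $J(\mu)$, compares with the hypothesis (\ref{eqn:maxi}), and lets $\epsilon\to 0$; no existence of a primal optimiser is invoked. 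You instead pull in Theorem \ref{thm:duality3} to obtain the minimiser $M_0$, apply Lemma \ref{lem:div} to the identity and the equality case from Example \ref{exa:identity} to conclude that $A=dM_0/d\norm{M_0}_1$ is symmetric and positive semidefinite almost everywhere, and finish with the linear-algebra fact that $\mathrm{tr}\big((S+T)A\big)=\mathrm{tr}(SA)\ge 0$ for $S,A$ symmetric PSD and $T$ antisymmetric. All of these ingredients are established before Proposition \ref{pro:monotone}, so there is no circularity and every step goes through. What you gain is that the argument simultaneously produces the representing $\mathcal{S}^{n\times n}_+$-valued measure of Theorem \ref{thm:positive} as a by-product; what you lose is economy — the paper's scaling trick proves the equivalence without constructing any optimiser, and then uses the proposition to feed Theorem \ref{thm:positive}, keeping the logical load of each statement minimal.
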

\begin{proof}
Suppose that (\ref{eqn:maxi}) holds true. Let $u\in\mathcal{C}^1(\mathbb{R}^n,\mathbb{R}^n)$ be monotone. Let $\epsilon>0$.  We see that $f\in\mathcal{C}^1(\mathbb{R}^n,\mathbb{R}^m)$ defined by
\begin{equation*}
f(x)=x-\epsilon u(x)
\end{equation*}
satisfies $\norm{f}_{\mathcal{C}^1}\leq \sqrt{1+\epsilon^2\norm{u}_{\mathcal{C}^1}^2}$. Indeed, by monotonicity of $u$, for any $x,v\in\mathbb{R}^n$ we have
\begin{equation*}
\norm{Df(x)v}^2=\norm{v}^2+\epsilon^2 \norm{Du(x)v}-2\epsilon\langle v,Du(x)v\rangle\leq \norm{v}^2+\epsilon^2\norm{Du}^2\norm{v}^2.
\end{equation*} 
Let $\frac{1}{c}=\sqrt{1+\epsilon^2\norm{u}_{\mathcal{C}^1}^2}$. Then, as $cf$ is $1$-Lipschitz, by (\ref{eqn:maxi}),
\begin{equation*}
\int_{\mathbb{R}^n}\langle x-cf(x),d\mu(x)\rangle \geq 0
\end{equation*}
and therefore
\begin{equation*}
\int_{\mathbb{R}^n}\langle u,d\mu\rangle \geq \frac{\epsilon\norm{u}_{\mathcal{C}^1}^2} {1+\sqrt{1+\epsilon^2\norm{u}_{\mathcal{C}^1}^2}}\int_{\mathbb{R}^n}\langle x,d\mu(x)\rangle
\end{equation*}
for any $\epsilon>0$. Letting $\epsilon$ tend to zero, we obtain $\mu\in\mathcal{P}$.

Assume now that $\mu\in \mathcal{P}$. We shall show that (\ref{eqn:maxi}) holds true. Take any map $h\in\mathcal{C}^1(\mathbb{R}^n,\mathbb{R}^n)$ with $\norm{h}_{\mathcal{C}^1}\leq 1$. Then for any $x\in\mathbb{R}^n$ and any $v\in\mathbb{R}^n$
\begin{equation*}
\langle v-Dh(x)v,v\rangle \geq 0.
\end{equation*}
Thus, by Proposition \ref{pro:mono}, $\mathrm{id}-h$ is monotone. As $-\mu\in \mathcal{P}$, we have
\begin{equation*}
\int_{\mathbb{R}^n}\langle x,d\mu(x)\rangle \geq \int_{\mathbb{R}^n}\langle h,d\mu\rangle.
\end{equation*}
\end{proof}

Let us denote by $\mathcal{S}^{n\times n}_+$  the set of all $n\times n$ positive semi-definite symmetric matrices.
\begin{theorem}\label{thm:positive}
For any $\mu\in \mathcal{P}$ with finite first moments, there exists a matrix-valued measure $M\in\mathcal{M}(\mathbb{R}^n,\mathcal{S}^{n\times n}_+)$ such that 
\begin{equation*}
\mu=-divM
\end{equation*}
and
\begin{equation*}
\int_{\mathbb{R}^n}\mathrm{tr}(dM)=\int_{\mathbb{R}^n}\langle x,d\mu(x)\rangle.
\end{equation*}
Conversely, if a divergence of an $\mathcal{S}^{n\times n}_+$-valued measure is a measure, then it belongs to $\mathcal{P}$.
\end{theorem}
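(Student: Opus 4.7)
The plan is to invoke the duality Theorem \ref{thm:duality3} for the specific $1$-Lipschitz test map $u_0(x) = x$, and then read off positivity of the optimal measure from the equality case in matrix H\"older's inequality.

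First, by Proposition \ref{pro:monotone} the hypothesis $\mu \in \mathcal{P}$ is equivalent to $J(\mu) = \int_{\mathbb{R}^n} \langle x, d\mu(x)\rangle$, so $u_0(x) = x$ realises the supremum defining $J(\mu)$. Theorem \ref{thm:duality3} furnishes an $M_0 \in \Xi(\mu)$ with $\norm{M_0}_{\mathcal{M}} = I(\mu) = J(\mu)$. Since $u_0$ is $\mathcal{C}^1$ with $Du_0 \equiv \mathrm{Id}$ (so $\norm{u_0}_{\mathcal{C}^1} = 1$), the pair $(u_0, M_0)$ meets the hypotheses of the last assertion of Theorem \ref{thm:duality3}, yielding
\[
\mathrm{tr}\Big(\frac{dM_0}{d\norm{M_0}_1}\Big) = \Big\langle \mathrm{Id}, \frac{dM_0}{d\norm{M_0}_1}\Big\rangle = 1 \quad \norm{M_0}_1\text{-almost everywhere.}
\]

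Writing $A := dM_0/d\norm{M_0}_1$, the defining property of the total variation measured with respect to the Schatten $1$-norm gives $\mathrm{tr}\abs{A} = \norm{A}_1 = 1$ $\norm{M_0}_1$-a.e. Combining the two identities, $\mathrm{tr}(A) = \mathrm{tr}\abs{A}$ $\norm{M_0}_1$-a.e., and Example \ref{exa:identity} (the equality case of $\abs{\mathrm{tr}\, A} \le \mathrm{tr}\abs{A}$) forces $A$ to be symmetric and positive semi-definite a.e. Setting $M := M_0$ (modified on a null set if necessary), $M$ takes values in $\mathcal{S}^{n\times n}_+$ and
\[
\int_{\mathbb{R}^n} \mathrm{tr}(dM) = \int_{\mathbb{R}^n} \mathrm{tr}(A)\, d\norm{M_0}_1 = \norm{M_0}_{\mathcal{M}} = J(\mu) = \int_{\mathbb{R}^n}\langle x, d\mu(x)\rangle,
\]
which completes the forward direction.

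For the converse, given $M \in \mathcal{M}(\mathbb{R}^n, \mathcal{S}^{n\times n}_+)$ with $\mu := -\mathrm{div}\, M$, I would apply Lemma \ref{lem:div} to any monotone $u \in \mathcal{C}^1(\mathbb{R}^n,\mathbb{R}^n)$ to get $\int\langle u, d\mu\rangle = \int\langle Du, dM\rangle$. Writing $A = dM/d\norm{M}_1$ (which is symmetric), the integrand equals $\mathrm{tr}(Du \cdot A)$. Decomposing $Du = e(u) + a$ into symmetric and antisymmetric parts, $\mathrm{tr}(a\, A) = 0$ because $a$ is antisymmetric and $A$ symmetric, while $\mathrm{tr}(e(u)\, A) = \mathrm{tr}\bigl(A^{\frac{1}{2}} e(u) A^{\frac{1}{2}}\bigr) \ge 0$ since both $e(u)$ and $A$ are symmetric positive semi-definite (the former by Proposition \ref{pro:mono}). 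Integrating with respect to $\norm{M}_1$ gives $\int\langle u, d\mu\rangle \ge 0$, whence $\mu \in \mathcal{P}$.

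The main obstacle I anticipate is the matrix-algebraic step in the forward direction: the dual problem only tells us that $\mathrm{tr}(A) = 1$ pointwise, and it takes Example \ref{exa:identity} together with the normalisation $\mathrm{tr}\abs{A} = 1$ coming from the total variation to upgrade this scalar identity to pointwise positive semi-definiteness of the Radon--Nikodym density. Once this is in hand, the mass equation and the converse are direct integrations.
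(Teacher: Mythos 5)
Your proof is correct. The forward direction is essentially identical to the paper's: Proposition \ref{pro:monotone} identifies the identity as an optimiser for $J(\mu)$, Theorem \ref{thm:duality3} supplies an optimal $M_0\in\Xi(\mu)$ together with the pointwise identity $\langle\mathrm{Id},dM_0/d\norm{M_0}_1\rangle=1$, and Example \ref{exa:identity} (combined with the automatic normalisation $\norm{dM_0/d\norm{M_0}_1}_1=1$) upgrades this scalar equality to pointwise symmetry and positive semi-definiteness of the density; you simply make the normalisation step more explicit than the paper does. The converse is where you genuinely deviate. The paper stays inside the duality machinery: it computes $\bigl\langle\mathrm{Id},\frac{dM}{d\norm{M}_1}\bigr\rangle=1$ pointwise for a PSD-valued $M$, deduces the sandwich $J(\mu)\geq\int\langle x,d\mu\rangle=\norm{M}_{\mathcal{M}}\geq I(\mu)=J(\mu)$, and concludes via Proposition \ref{pro:monotone} -- incidentally exhibiting $M$ as a minimiser for (\ref{eqn:measure}). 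You instead verify $\mu\in\mathcal{P}$ directly: applying Lemma \ref{lem:div} against a monotone $u\in\mathcal{C}^1(\mathbb{R}^n,\mathbb{R}^n)$, splitting $Du=e(u)+a$ into symmetric and antisymmetric parts, killing the antisymmetric contribution against the symmetric density $A=\frac{dM}{d\norm{M}_1}$, and using $\mathrm{tr}(e(u)A)=\mathrm{tr}\bigl(A^{\frac12}e(u)A^{\frac12}\bigr)\geq 0$ because both matrices are symmetric positive semi-definite. Your route is more elementary -- it bypasses Proposition \ref{pro:monotone} and the $J(\mu)$-sandwich, reducing the converse to the basic linear-algebra fact that two symmetric PSD matrices pair to a nonnegative trace -- while the paper's route has the small added payoff of showing that $M$ is automatically optimal. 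Both arguments rely on testing $-\mathrm{div}\,M=\mu$ against a $\mathcal{C}^1$ function of linear growth, i.e.\ tacitly invoke Lemma \ref{lem:div} and hence the hypotheses $\mu(\mathbb{R}^n)=0$ and finite first moments of $\mu$; this is a shared implicit assumption, not a gap particular to your proof.
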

\begin{proof}
Suppose that $\mu\in \mathcal{P}$ has finite first moments. Then we know that 
\begin{equation*}
\int_{\mathbb{R}^n}\langle x,d\mu(x)\rangle=J(\mu).
\end{equation*}
Let $M_0$ be such that $\norm{M_0}_{\mathcal{M}}$ is minimal on $\Xi(\mu)$, i.e. on the set of measures $M$ such that
\begin{equation*}
-divM=\mu.
\end{equation*}
Then by Theorem \ref{thm:duality3} we know that
\begin{equation*}
\Big\langle \mathrm{Id},\frac{dM_0}{d\norm{M_0}_1}\Big\rangle =1
\end{equation*}
$\norm{M_0}_1$-almost everywhere. By Example \ref{exa:identity} it follows that this holds if and only if $\frac{dM_0}{d\norm{M_0}_1}$ is symmetric and positive semi-definite. Then,
\begin{equation*}
\int_{\mathbb{R}^n}\langle x,d\mu(x)\rangle=\norm{M_0}_{\mathcal{M}}= \int_{\mathbb{R}^n}\mathrm{tr}(dM_0).
\end{equation*}
This proves the first part of the theorem. 
For the second, let $M\in\mathcal{M}(\mathbb{R}^n,\mathcal{S}^{n\times n}_+)$ be such that $\mu=-div M$. Then, as $M$ is positive semi-definite,
\begin{equation*}
\Big\langle \mathrm{Id},\frac{dM}{d\norm{M}_1}\Big\rangle =\Big\lVert \frac{dM}{d\norm{M}_1}\Big\rVert_1=1.
\end{equation*}
It follows that 
\begin{equation*}
J(\mu)\geq \int_{\mathbb{R}^n}\langle x,d\mu(x)\rangle=\norm{M}_{\mathcal{M}}\geq J(\mu).
\end{equation*}
and by Proposition \ref{pro:monotone} we conclude the proof.
\end{proof}

\begin{remark}
The advantage of the proof above is that the representation is a direct consequence of the duality formula and Example \ref{exa:identity}. Thus the proof may be adapted to computation of the representations of other dual cones.
\end{remark}


\subsection{Polar cones to tangent cones of $\mathcal{C}^1(\mathbb{R}^n,\mathbb{R}^m)$}\label{s:tangent}

Below we shall apply Theorem \ref{thm:duality3} to compute the representations of polar cones to certain tangent cones to the unit ball of $\mathcal{C}^1(\mathbb{R}^n,\mathbb{R}^m)$.

\begin{definition}
For $f\in\mathcal{C}^1(\mathbb{R}^n,\mathbb{R}^m)$, $\norm{f}_{\mathcal{C}^1}=1$ define
\begin{equation*}
\mathcal{P}_f=\Big\{\mu\in\mathcal{M}(\mathbb{R}^n,\mathbb{R}^m)\mid \mu(\mathbb{R}^n)=0, \int_{\mathbb{R}^n}\norm{x}d\norm{\mu}(x)<\infty, J(\mu)=\int_{\mathbb{R}^n}\langle f,d\mu\rangle \Big\}.
\end{equation*}
\end{definition}

Let us note that the set $\mathcal{P}_f$ is the polar cone to the tangent cone of the unit ball at $f\in\mathcal{C}^1(\mathbb{R}^n,\mathbb{R}^m)$.

\begin{proposition}\label{pro:gene}
Suppose that $f\in\mathcal{C}^1(\mathbb{R}^n,\mathbb{R}^m)$, $\norm{f}_{\mathcal{C}^1}=1$. Measure $\mu$ with finite first moments belongs to $\mathcal{P}_f$ if and only if for any map $h\in \mathcal{C}^1(\mathbb{R}^n,\mathbb{R}^m)$ such that for $\epsilon>0$ the Lipschitz constants $\lambda_{\epsilon}$ of $f-\epsilon h$ satisfy
\begin{equation}\label{eqn:polar}
\liminf_{\epsilon\to 0^+}\frac{1-\lambda_{\epsilon}}{\epsilon}\geq 0
\end{equation}
there is $\int_{\mathbb{R}^n}\langle h,d\mu\rangle\geq 0$.
\end{proposition}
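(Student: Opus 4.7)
The plan is to prove both directions by direct perturbation arguments that exploit the positive homogeneity of the constraint $\norm{\cdot}_{\mathcal{C}^1}\leq 1$ and the boundary condition $\norm{f}_{\mathcal{C}^1}=1$. Observe first that the Lipschitz constant $\lambda_\epsilon$ of $f-\epsilon h$ coincides with the $\mathcal{C}^1$-seminorm $\norm{f-\epsilon h}_{\mathcal{C}^1}$, so no new quantities need be introduced, and that $J(\mu)\geq 0$, since the zero map is admissible in the supremum defining $J$.

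For the forward implication, assume $\mu\in\mathcal{P}_f$, i.e.\ $\int\langle f,d\mu\rangle=J(\mu)$, and let $h\in\mathcal{C}^1(\mathbb{R}^n,\mathbb{R}^m)$ satisfy \eqref{eqn:polar}. By the reverse triangle inequality, $\lambda_\epsilon\geq 1-\epsilon\norm{h}_{\mathcal{C}^1}>0$ for all sufficiently small $\epsilon>0$, so $(f-\epsilon h)/\lambda_\epsilon$ is admissible in the supremum defining $J(\mu)$. This yields
\begin{equation*}
\int\Big\langle\tfrac{f-\epsilon h}{\lambda_\epsilon},d\mu\Big\rangle\leq J(\mu)=\int\langle f,d\mu\rangle,
\end{equation*}
which after multiplication by $\lambda_\epsilon$ and rearrangement reads
\begin{equation*}
-\int\langle h,d\mu\rangle\leq\tfrac{\lambda_\epsilon-1}{\epsilon}\int\langle f,d\mu\rangle.
\end{equation*}
Taking $\limsup_{\epsilon\to 0^+}$ and using \eqref{eqn:polar} together with $J(\mu)\geq 0$ delivers $\int\langle h,d\mu\rangle\geq 0$.

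For the converse, let $u\in\mathcal{C}^1(\mathbb{R}^n,\mathbb{R}^m)$ with $\norm{u}_{\mathcal{C}^1}\leq 1$ be arbitrary and set $h:=f-u$. Then for $\epsilon\in[0,1]$, $f-\epsilon h=(1-\epsilon)f+\epsilon u$ is a convex combination of two elements of the unit ball of $\norm{\cdot}_{\mathcal{C}^1}$, so $\lambda_\epsilon\leq 1$ and \eqref{eqn:polar} holds trivially. The hypothesis then yields $\int\langle f-u,d\mu\rangle\geq 0$, i.e.\ $\int\langle u,d\mu\rangle\leq\int\langle f,d\mu\rangle$. Taking the supremum over admissible $u$ and noting that $f$ itself is admissible gives $J(\mu)=\int\langle f,d\mu\rangle$, i.e.\ $\mu\in\mathcal{P}_f$.

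There is no substantial obstacle; the argument is a textbook variational perturbation made possible by the explicit shape of the tangent cone to the unit ball of a seminorm. The only minor technicality is ensuring $\lambda_\epsilon>0$ in the forward direction so that renormalisation by $\lambda_\epsilon$ is legitimate, and this is handled by the reverse triangle inequality as noted.
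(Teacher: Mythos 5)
Your proof is correct and follows essentially the same route as the paper: in the forward direction you renormalise $f-\epsilon h$ by $\lambda_\epsilon$ to land in the unit ball and pass to the limit; in the converse you take $h=f-u$ and observe that $(1-\epsilon)f+\epsilon u$ already lies in the unit ball. The only presentational differences are that you justify $\lambda_\epsilon>0$ via the reverse triangle inequality and obtain $\int\langle f,d\mu\rangle\geq 0$ from $J(\mu)\geq 0$ (zero map admissible) rather than by comparing with $-f$; both points are equivalent to the paper's.
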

\begin{proof}
Let $\mu\in\mathcal{P}_f$. Suppose that for $h\in \mathcal{C}^1(\mathbb{R}^n,\mathbb{R}^m)$ condition (\ref{eqn:polar}) holds true. 
For such $h$, consider 
\begin{equation*}
g=f-\epsilon h.
\end{equation*}
Then $\frac{1}{\lambda_{\epsilon}}g$ is $1$-Lipschitz and hence
\begin{equation*}
\int_{\mathbb{R}^n}\langle f,d\mu\rangle\geq \frac1{\lambda_{\epsilon}}\int_{\mathbb{R}^n}\langle g,d\mu\rangle.
\end{equation*}
It follows that 
\begin{equation*}
\int_{\mathbb{R}^n}\langle h,d\mu\rangle\geq \frac{1-\lambda_{\epsilon}}{\epsilon}\int_{\mathbb{R}^n}\langle f,d\mu\rangle.
\end{equation*}
Note that $\int_{\mathbb{R}^n}\langle f,d\mu\rangle\geq 0$ -- otherwise $-f$ would yield greater value of the integral. Thus the assertion follows by taking limit. 

Conversely, suppose that for any $h\in\mathcal{C}^1(\mathbb{R}^n,\mathbb{R}^m)$ such that (\ref{eqn:polar}) holds true there is $\int_{\mathbb{R}^n}\langle h,d\mu\rangle\geq 0$. Choose any $g\in\mathcal{C}^1(\mathbb{R}^n,\mathbb{R}^m)$ with $\norm{g}_{\mathcal{C}^1}\leq 1$. Let $h=f-g$. Then the corresponding Lipschitz constant of $f-\epsilon h=(1-\epsilon)f+\epsilon g$ is at most one. 
Hence, the condition (\ref{eqn:polar}) is satisfied for $h$ and the claim follows.
\end{proof}

The proposition above tells us that $\mathcal{P}_f$ is the dual cone to the convex cone $\mathcal{H}_f$ of functions $h\in\mathcal{C}^1(\mathbb{R}^n,\mathbb{R}^n)$ that satisfy (\ref{eqn:polar}). 
Let us show that  $\mathcal{H}_f$ is indeed a convex cone.
Clearly, if $h\in\mathcal{H}_f$, then also $\lambda h\in\mathcal{H}_f$ for any non-negative $\lambda$. If $h_1,h_2\in\mathcal{H}_f$, then $\frac{h_1+h_2}2\in\mathcal{H}_f$, as Lipschitz constant of a convex combination of functions is at most the convex combination of the corresponding Lipschitz constants.

Let us note that, when $f$ is the identity map, the condition (\ref{eqn:polar}) simplifies to demand that $h$ is monotone.

\begin{theorem}\label{thm:cones}
Suppose that $f\in\mathcal{C}^1(\mathbb{R}^n,\mathbb{R}^m)$, $\norm{f}_{\mathcal{C}^1}=1$. For any $\mu\in \mathcal{P}_f$ there exists a measure $M\in\mathcal{M}(\mathbb{R}^n,\mathbb{R}^{m\times n})$ with
\begin{equation*}
-divM=\mu,
\end{equation*}
and such that $\norm{M}_1$-almost everywhere
\begin{equation}\label{eqn:isometry}
 DfDf^*=\mathrm{Id}\text{ on }\mathrm{im}\Big(\frac{dM}{d\norm{M}_1}\Big),
\end{equation}
and
\begin{equation}\label{eqn:symm}
Df^*\frac{dM}{d\norm{M}_1}\text{ is symmetric and positive semidefinite.}
\end{equation}
Moreover
\begin{equation*}
\norm{M}_{\mathcal{M}}=\int_{\mathbb{R}^n}\langle f,d\mu\rangle.
\end{equation*}
Conversely, if $M\in\mathcal{M}(\mathbb{R}^n,\mathbb{R}^{m\times n})$ is such that $-divM$ is a finite vector-valued measure such that (\ref{eqn:isometry}) and (\ref{eqn:symm}) are satisfied then $-divM\in \mathcal{P}_f$.
\end{theorem}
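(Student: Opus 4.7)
The plan is to apply the duality theorem \ref{thm:duality3} to $\mu$, and then convert the optimality identity (\ref{eqn:optimal}) into the two geometric conditions (\ref{eqn:isometry}) and (\ref{eqn:symm}) by reading off what equality in matrix H\"older's inequality means, via Theorem \ref{thm:equality} and Remark \ref{rem:iso}. The converse proceeds by running the same chain of equivalences in the opposite direction.

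For the forward direction, note that $\mu\in\mathcal{P}_f$ means precisely that $f$, which is admissible for $J(\mu)$ since $\norm{f}_{\mathcal{C}^1}=1$, attains the supremum. By Theorem \ref{thm:duality3} pick an optimal $M_0\in\Xi(\mu)$ with $\norm{M_0}_{\mathcal{M}}=I(\mu)=J(\mu)=\int_{\mathbb{R}^n}\langle f,d\mu\rangle$. Since $f\in\mathcal{C}^1$ is differentiable everywhere, relation (\ref{eqn:optimal}) applied with $u_0=f$ yields
\[
\Big\langle Df,\frac{dM_0}{d\norm{M_0}_1}\Big\rangle=1\quad \norm{M_0}_1\text{-a.e.}
\]
Set $A=dM_0/d\norm{M_0}_1$ and $B=Df$; by construction of total variation with respect to the Schatten $1$-norm, $\norm{A}_1=1$ almost everywhere, and $\norm{B}\leq 1$ everywhere. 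The identity above is exactly $\mathrm{tr}(A^*B)=\mathrm{tr}\abs{A}$, so Theorem \ref{thm:equality} applies and gives that $A^*B=Df^*A$ is symmetric and positive semidefinite, which is (\ref{eqn:symm}), together with $A^*BB^*A=A^*A$; by Remark \ref{rem:iso} this is equivalent to $DfDf^*=\mathrm{Id}$ on $\mathrm{im}\,A$, which is (\ref{eqn:isometry}). Finally $\norm{M_0}_{\mathcal{M}}=\int_{\mathbb{R}^n}\langle f,d\mu\rangle$ has already been recorded.

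For the converse, suppose $M$ satisfies $-div\,M=\mu$ together with (\ref{eqn:isometry}) and (\ref{eqn:symm}). With $A=dM/d\norm{M}_1$ and $B=Df$, Remark \ref{rem:iso} turns (\ref{eqn:isometry}) into $A^*BB^*A=A^*A$, and the converse implication of Theorem \ref{thm:equality} then yields $\mathrm{tr}(Df^*A)=\mathrm{tr}\abs{A}=1$ almost everywhere with respect to $\norm{M}_1$. Integrating and using Lemma \ref{lem:div} (finite first moments of $\mu$ follow from the bound $\int\langle f,d\mu\rangle=\norm{M}_{\mathcal{M}}$ together with the use of $\mathcal{C}^1_0$ test functions as in (\ref{eqn:jot})),
\[
\int_{\mathbb{R}^n}\langle f,d\mu\rangle=\int_{\mathbb{R}^n}\langle Df,dM\rangle=\int_{\mathbb{R}^n}\langle Df,A\rangle d\norm{M}_1=\norm{M}_{\mathcal{M}}.
\]
Since $f$ is admissible for $J(\mu)$ we have $J(\mu)\geq\int_{\mathbb{R}^n}\langle f,d\mu\rangle$, while Theorem \ref{thm:duality3} and the definition of $I$ give $J(\mu)=I(\mu)\leq\norm{M}_{\mathcal{M}}$; all inequalities collapse, so $J(\mu)=\int_{\mathbb{R}^n}\langle f,d\mu\rangle$, i.e.\ $\mu\in\mathcal{P}_f$.

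The argument is essentially an unpacking of the equality case in matrix H\"older's inequality, with Remark \ref{rem:iso} supplying the translation between the algebraic identity $A^*BB^*A=A^*A$ and the geometric isometry condition on $\mathrm{im}\,A$. The only real point of care is keeping straight the roles of $A$ and $B$ in Theorem \ref{thm:equality} (the factor with operator norm $\leq 1$ is $Df$, while the unit Schatten-$1$ factor is the Radon--Nikodym derivative of $M$); once this matching is fixed, both directions are a direct chase through the tools already established.
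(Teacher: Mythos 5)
Your proof is correct and follows exactly the route the paper intends: the paper's own proof of Theorem \ref{thm:cones} is the one-line reference to Theorem \ref{thm:equality}, Theorem \ref{thm:duality3} and the proof of Theorem \ref{thm:positive}, and you have simply unpacked that chain, with the right identification $A=dM_0/d\norm{M_0}_1$ (Schatten-$1$ normalised) and $B=Df$ (operator norm $\le 1$), and with Remark \ref{rem:iso} supplying the translation of $A^*BB^*A=A^*A$ into $DfDf^*=\mathrm{Id}$ on $\mathrm{im}\,A$.
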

\begin{proof}
Follows from Theorem \ref{thm:equality} and from Theorem \ref{thm:duality3}, c.f. proof of Theorem \ref{thm:positive}.
\end{proof}

\subsection{Polar cones to tangent cones of $\mathcal{W}^{1,p}(\Omega,\mathbb{R}^m)$}\label{s:tangentabs}

The method developed above may be as well applied in the context of absolutely continuous vector measures with use of results of Section \ref{s:abs}. 

In what follows we shall understand that $-divH=h$ if (\ref{eqn:ediv}) holds true for all $f\in \mathcal{W}^{1,p}(\Omega,\mathbb{R}^m)$.

\begin{definition}
Let $p,q\in (1,\infty)$ be such that $\frac1p+\frac1q=1$ and let $\Omega\subset\mathbb{R}^n$ be an open set. For $f\in \mathcal{W}^{1,p}(\Omega,\mathbb{R}^m)$, $\norm{Df}_{L^p}=1$ define
\begin{equation*}
\mathcal{R}_f=\Big\{h\in L^q(\Omega,\mathbb{R}^m)\mid J^q_{\lambda}(h)=\int_{\Omega}\langle f,h\rangle d\lambda\Big\}.
\end{equation*}
\end{definition}

In the above definition $\mathcal{R}_f$ is the polar cone to the tangent cone of the unit ball of $\mathcal{W}^{1,p}(\Omega,\mathbb{R}^m)$.

\begin{proposition}\label{pro:abso}
Let $p,q\in (1,\infty)$ be such that $\frac1p+\frac1q=1$ and let let $\Omega\subset\mathbb{R}^n$ be an open set. Let $f\in\mathcal{W}^{1,p}(\Omega,\mathbb{R}^m)$ be such that $\norm{Df}_{L^p}=1$.
Then $h\in L^q(\Omega,\mathbb{R}^m)$ belongs to $\mathcal{R}_f$ if and only if for any map $g\in\mathcal{W}^{1,p}(\Omega,\mathbb{R}^m)$ such that 
\begin{equation*}
\liminf_{\epsilon\to 0^+}\frac{1-\norm{Df-\epsilon Dg}_{L^p}}{\epsilon}\geq 0
\end{equation*}
there is $\int_{\Omega}\langle h,g\rangle d\lambda\geq 0$.
\end{proposition}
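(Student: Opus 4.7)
The plan is to mimic the argument for Proposition~\ref{pro:gene} essentially verbatim, replacing the Lipschitz-constant bookkeeping by the $L^p$-norm of the derivative.

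For the forward direction, I would assume $h \in \mathcal{R}_f$ and take any $g \in \mathcal{W}^{1,p}(\Omega,\mathbb{R}^m)$ satisfying the liminf condition. Setting $\lambda_{\epsilon} = \norm{Df - \epsilon Dg}_{L^p}$, the function $(f-\epsilon g)/\lambda_{\epsilon}$ is admissible in the supremum defining $J^q_{\lambda}(h)$, so
\begin{equation*}
\int_{\Omega}\langle f,h\rangle d\lambda = J^q_{\lambda}(h) \geq \frac{1}{\lambda_{\epsilon}}\int_{\Omega}\langle f-\epsilon g,h\rangle d\lambda.
\end{equation*}
Rearranging gives
\begin{equation*}
\int_{\Omega}\langle g,h\rangle d\lambda \geq \frac{1-\lambda_{\epsilon}}{\epsilon}\int_{\Omega}\langle f,h\rangle d\lambda.
\end{equation*}
Since $-f$ is itself a competitor in the supremum, $\int_{\Omega}\langle f,h\rangle d\lambda \geq 0$, so taking $\liminf_{\epsilon \to 0^+}$ of the right-hand side yields $\int_{\Omega}\langle g,h\rangle d\lambda \geq 0$.

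For the converse, given any $g' \in \mathcal{W}^{1,p}(\Omega,\mathbb{R}^m)$ with $\norm{Dg'}_{L^p}\leq 1$, I would put $g = f - g'$. Then for $\epsilon \in [0,1]$, by convexity of the $L^p$-norm,
\begin{equation*}
\norm{Df-\epsilon Dg}_{L^p} = \norm{(1-\epsilon)Df + \epsilon Dg'}_{L^p} \leq (1-\epsilon)\norm{Df}_{L^p}+\epsilon \norm{Dg'}_{L^p}\leq 1,
\end{equation*}
so $(1-\lambda_{\epsilon})/\epsilon \geq 0$ for all small $\epsilon > 0$, and in particular the liminf is non-negative. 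The hypothesis then gives $\int_{\Omega}\langle f-g',h\rangle d\lambda \geq 0$, i.e. $\int_{\Omega}\langle f,h\rangle d\lambda \geq \int_{\Omega}\langle g',h\rangle d\lambda$. Taking the supremum over admissible $g'$ produces $\int_{\Omega}\langle f,h\rangle d\lambda \geq J^q_{\lambda}(h)$, and the reverse inequality is immediate from the admissibility of $f$ itself. This gives $h\in\mathcal{R}_f$.

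I do not expect any serious obstacle: all of the work was already done in Theorem~\ref{thm:dualityLebesgue} and the definition of $J^q_{\lambda}(h)$, and the convexity of the $L^p$-norm plays here exactly the same role that the convexity of Lipschitz constants plays in Proposition~\ref{pro:gene}. The only place to be mildly careful is in dividing through by $\lambda_{\epsilon}$, which is harmless because $\lambda_{\epsilon}\to \norm{Df}_{L^p}=1$, and in noting that the appeal to $-f$ as a competitor gives $\int_{\Omega}\langle f,h\rangle d\lambda \geq 0$ without needing any further hypothesis on $h$.
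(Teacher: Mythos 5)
Your proof is correct and is exactly the argument the paper has in mind: the paper's own proof of Proposition~\ref{pro:abso} simply declares that it "follows analogous lines" to Proposition~\ref{pro:gene}, and your transcription of that argument (replacing the Lipschitz constant of $f-\epsilon g$ by $\norm{Df-\epsilon Dg}_{L^p}$ and using convexity of the $L^p$-norm in place of the concatenation bound on Lipschitz constants) is faithful. The observations that $\int_{\Omega}\langle f,h\rangle\,d\lambda\geq 0$ (so the liminf bound transfers) and that $\lambda_{\epsilon}\to 1$ (so division is safe) are the same small points one must note in the Lipschitz version.
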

\begin{proof}
The proof follows analogous lines to the lines of the proof of Proposition \ref{pro:gene}.
\end{proof}

\begin{theorem}\label{thm:abso}
Let $p,q\in (1,\infty)$ be such that $\frac1p+\frac1q=1$ and let $\Omega\subset\mathbb{R}^n$ be an open, bounded and connected set with Lipschitz boundary. Let $f\in\mathcal{W}^{1,p}(\Omega,\mathbb{R}^m)$ be such that $\norm{Df}_{L^p}=1$. For any $h\in \mathcal{R}_f$, there exists $H\in L^q(\Omega,\mathbb{R}^{m\times n})$ such that
\begin{equation*}
-div H=h
\end{equation*}
and such that $\lambda$-almost everywhere
\begin{equation}\label{eqn:divv}
\frac{Df^*H}{\norm{H}_{L^q}}=\frac{H^*Df}{\norm{H}_{L^q}}=|Df|^p=\frac{|H|^q}{\norm{H}_{L^q}^q}.
\end{equation}
Conversely if $H\in L^q(\Omega,\mathbb{R}^{m\times n})$ is such that $-divH$ is a function in $L^q(\Omega,\mathbb{R}^m)$, that satisfies (\ref{eqn:divv}), then $-divH\in\mathcal{R}_f$.
\end{theorem}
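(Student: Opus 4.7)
My plan is to deduce Theorem \ref{thm:abso} directly from the duality statement of Theorem \ref{thm:dualityLebesgue}, in the same spirit as the deduction of Theorem \ref{thm:cones} from Theorem \ref{thm:duality3}. The key observation is that the equality condition (\ref{eqn:optifun}) characterising extremisers of the dual pair is literally the condition (\ref{eqn:divv}) appearing in the statement, so the content of the theorem is really a reinterpretation of the equality case of matrix H\"older's inequality (Theorem \ref{thm:equalitypq}) packaged via the duality.

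For the forward direction, let $h\in\mathcal{R}_f$. I would first observe that necessarily $\int_\Omega h\,d\lambda = 0$: otherwise one could perturb an admissible $g$ in the supremum defining $J^q_\lambda(h)$ by an arbitrary constant vector (which leaves $\norm{Dg}_{L^p}$ unchanged) and force $J^q_\lambda(h) = +\infty$, contradicting $J^q_\lambda(h) = \int_\Omega\langle f,h\rangle\,d\lambda < \infty$. Theorem \ref{thm:dualityLebesgue} then supplies $H \in \Gamma^q_\lambda(h)$ with $\norm{H}_{L^q} = I^q_\lambda(h) = J^q_\lambda(h)$, and in turn $-divH=h$. Since $\norm{Df}_{L^p} = 1$ and $\int_\Omega\langle f,h\rangle\,d\lambda = J^q_\lambda(h)$, the function $f$ is itself an extremiser of the primal problem; hence by the equality-case portion of Theorem \ref{thm:dualityLebesgue} the pair $(f,H)$ satisfies (\ref{eqn:optifun}), which is exactly (\ref{eqn:divv}).

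For the converse, let $H \in L^q(\Omega,\mathbb{R}^{m\times n})$ with $-divH = h \in L^q(\Omega,\mathbb{R}^m)$ and suppose (\ref{eqn:divv}) holds. I would use (\ref{eqn:ediv}) and (\ref{eqn:divv}) to compute
\begin{equation*}
\int_\Omega\langle f,h\rangle\,d\lambda = \int_\Omega\langle Df,H\rangle\,d\lambda = \norm{H}_{L^q}\int_\Omega\mathrm{tr}(|Df|^p)\,d\lambda = \norm{H}_{L^q},
\end{equation*}
where the last equality uses $\norm{Df}_{L^p} = 1$. For the upper bound on $J^q_\lambda(h)$, matrix H\"older (Theorem \ref{thm:holder}) combined with the scalar H\"older inequality in the exponents $p,q$ gives, for any competitor $g\in\mathcal{W}^{1,p}(\Omega,\mathbb{R}^m)$ with $\norm{Dg}_{L^p}\leq 1$,
\begin{equation*}
\int_\Omega\langle g,h\rangle\,d\lambda = \int_\Omega\langle Dg,H\rangle\,d\lambda \leq \norm{Dg}_{L^p}\norm{H}_{L^q}\leq \norm{H}_{L^q}.
\end{equation*}
Taking the supremum yields $J^q_\lambda(h)\leq \norm{H}_{L^q}=\int_\Omega\langle f,h\rangle\,d\lambda$, while the opposite inequality is immediate from the definition of $J^q_\lambda$ and admissibility of $f$. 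Hence $h\in\mathcal{R}_f$.

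The argument is essentially routine given the machinery in place; the only mild subtleties are the bookkeeping needed to ensure that the specific $f$ in the hypothesis (prescribed only by $\norm{Df}_{L^p}=1$ together with the integral identity defining $\mathcal{R}_f$) can be substituted for the abstract primal extremiser in the equality case of Theorem \ref{thm:dualityLebesgue}, and the handling of the degenerate case $h=0$, for which one takes $H=0$ and interprets (\ref{eqn:divv}) with the convention $0/0=0$.
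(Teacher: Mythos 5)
Your argument is correct and matches the paper's, which deduces the theorem from Theorem \ref{thm:dualityLebesgue} exactly as Theorem \ref{thm:cones} is obtained from Theorem \ref{thm:duality3}. One small quibble: in the degenerate case $h=0$ your convention $0/0=0$ does not actually make \eqref{eqn:divv} hold (it would force $|Df|^p\equiv 0$, contradicting $\norm{Df}_{L^p}=1$); it is cleaner to exclude $h=0$ explicitly, mirroring the ``if $h\neq 0$'' clause in Theorem \ref{thm:dualityLebesgue}.
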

\begin{proof}
Again, the proof relies on Theorem \ref{thm:dualityLebesgue} and the strategy does not differ from the strategies in Theorem \ref{thm:positive} and in Theorem  \ref{thm:cones}.
\end{proof}

Let us apply the above results to a particular case when $p=2$. We shall below compute the dual cone to the cone of maps $g\in\mathcal{W}^{1,2}(\Omega,\mathbb{R}^m)$ such that $\int_{\Omega}\langle Dg,Df\rangle d\lambda\geq 0$.

\begin{corollary}\label{col:peq2}
Let $\Omega\subset\mathbb{R}^n$ be an open, bounded and connected set with Lipschitz boundary. Let $f\in\mathcal{W}^{1,2}(\Omega,\mathbb{R}^m)$, $\norm{Df}_{L^2}=1$. For $h\in L^2(\Omega,\mathbb{R}^m)$, the following conditions are equivalent:
\begin{enumerate}[label=\roman*)]
\item there exists $H\in L^2(\Omega,\mathbb{R}^{m\times n})$ such that $-div H=h$ and $\lambda$-almost everywhere there is
\begin{equation}\label{eqn:cooo}
\frac{Df^*H}{\norm{H}_{L^2}}=\frac{H^*Df}{\norm{H}_{L^2}}=|Df|^2=\frac{|H|^2}{\norm{H}_{L^2}^2},
\end{equation}
\item for any $g\in\mathcal{W}^{1,2}(\Omega,\mathbb{R}^m)$ such that 
\begin{equation*}
\int_{\Omega}\langle Dg, Df\rangle d\lambda\geq 0
\end{equation*}
there is $\int_{\Omega}\langle g,h\rangle d\lambda\geq 0$.
\end{enumerate}
\end{corollary}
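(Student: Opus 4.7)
The plan is to apply Theorem~\ref{thm:abso} together with Proposition~\ref{pro:abso} in the self-dual case $p=q=2$ and to observe that the tangent cone condition appearing in Proposition~\ref{pro:abso} collapses to a simple bilinear inequality when the ambient norm is Hilbertian, which is precisely the distinctive feature of the $L^2$ setting. Concretely, (i) is equivalent to $h\in\mathcal{R}_f$ by Theorem~\ref{thm:abso} (the case $p=q=2$ makes (\ref{eqn:divv}) coincide with (\ref{eqn:cooo})), so it suffices to rewrite the defining tangent cone condition of $\mathcal{R}_f$ supplied by Proposition~\ref{pro:abso} in the form (ii).

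The bulk of the work is the tangent cone calculation. For any $g\in\mathcal{W}^{1,2}(\Omega,\mathbb{R}^m)$, exploiting the Hilbert space structure on $L^2(\Omega,\mathbb{R}^{m\times n})$ induced by the Frobenius/Schatten-$2$ inner product $\langle A,B\rangle=\mathrm{tr}(AB^*)$ and using $\norm{Df}_{L^2}=1$, one has
\begin{equation*}
\norm{Df-\epsilon Dg}_{L^2}^2 = 1 - 2\epsilon\int_{\Omega}\langle Df,Dg\rangle d\lambda + \epsilon^2\norm{Dg}_{L^2}^2.
\end{equation*}
Taking a square root and Taylor expanding about $\epsilon=0$, one obtains the honest two-sided limit
\begin{equation*}
\lim_{\epsilon\to 0^+}\frac{1-\norm{Df-\epsilon Dg}_{L^2}}{\epsilon}=\int_{\Omega}\langle Df,Dg\rangle d\lambda,
\end{equation*}
so that the $\liminf$ condition of Proposition~\ref{pro:abso} is equivalent to $\int_{\Omega}\langle Dg,Df\rangle d\lambda\geq 0$.

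Combining the two observations, Proposition~\ref{pro:abso} yields that $h\in\mathcal{R}_f$ is equivalent to (ii), while Theorem~\ref{thm:abso} yields that $h\in\mathcal{R}_f$ is equivalent to (i); chaining these equivalences gives the corollary. There is essentially no obstacle: the only point that demands care is that the one-sided $\liminf$ coincides with a genuine limit, which is automatic from the analyticity of $\epsilon\mapsto\norm{Df-\epsilon Dg}_{L^2}^2$ near the origin and guarantees that the polarity direction matches without additional restrictions on the sign of $\int_{\Omega}\langle Dg,Df\rangle d\lambda$.
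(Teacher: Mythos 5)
Your argument is correct and follows the same route as the paper: reduce (i)~$\Leftrightarrow$~$h\in\mathcal{R}_f$ via Theorem~\ref{thm:abso}, then translate the $\liminf$ defining $\mathcal{R}_f$ in Proposition~\ref{pro:abso} into $\int_{\Omega}\langle Df,Dg\rangle\,d\lambda\geq 0$ using the inner-product expansion of $\norm{Df-\epsilon Dg}_{L^2}^2$. The only cosmetic difference is that the paper evaluates the limit via the elementary identity
$\frac{1-\norm{Df-\epsilon Dg}_{L^2}}{\epsilon}=\frac{1-\norm{Df-\epsilon Dg}_{L^2}^2}{\epsilon(1+\norm{Df-\epsilon Dg}_{L^2})}$
rather than by Taylor expansion of the square root, but the two computations are equivalent.
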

\begin{proof}
Observe that if $p=2$, then Proposition \ref{pro:abso} yields that $\mathcal{R}_f$ consists of exactly these $h\in L^2(\Omega,\mathbb{R}^m)$ such that if $g\in\mathcal{W}^{1,2}(\Omega,\mathbb{R}^m)$ satisfies
\begin{equation*}
\int_{\Omega}\langle Df,Dg\rangle d\lambda\geq 0
\end{equation*}
then $\int_{\Omega}\langle h,g\rangle d\lambda\geq 0$.
Indeed, we have for $\epsilon>0$,
\begin{equation*}
\frac{1-\norm{Df-\epsilon Dg}_{L^2}}{\epsilon}=\frac{1-\norm{Df-\epsilon Dg}_{L^2}^2}{\epsilon(1+\norm{Df-\epsilon Dg}_{L^2})}= \frac{-\epsilon \norm{Dg}_{L^2}+2 \int_{\Omega}\langle Df,Dg\rangle d\lambda}{1+\norm{Df-\epsilon Dg}_{L^2}} 
\end{equation*}
and this quantity converges to $\int_{\Omega} \langle Df,Dg\rangle d\lambda$ as $\epsilon$ tends to zero.
Now, Theorem \ref{thm:abso} tells us that there exists $H\in L^2(\Omega,\mathbb{R}^{m\times n})$ such that $-div H=h$ and such that (\ref{eqn:cooo}) holds true.
\end{proof}

Note that the limit that is under investigation in Proposition \ref{pro:abso} is actually computable for all values of $p\in (1,\infty)$. It is the lower Dini derivative of the norm of $f$ taken in direction of $g$. 

\begin{theorem}\label{thm:polarsob}
Let $\Omega\subset\mathbb{R}^n$ be an open, bounded and connected set with Lipschitz boundary. Let $p\in (1,\infty)$, $\frac1p+\frac1q=1$, $f\in\mathcal{W}^{1,p}(\Omega,\mathbb{R}^m)$,  $\norm{Df}_{L^p}=1$. For a map $h\in L^q(\Omega,\mathbb{R}^m)$, the following conditions are equivalent:
\begin{enumerate}[label=\roman*)]
\item there exists $H\in L^q(\Omega,\mathbb{R}^{m\times n})$ such that $-div H=h$ and $\lambda$-almost everywhere
\begin{equation}\label{eqn:divve}
\frac{Df^*H}{\norm{H}_{L^q}}=\frac{H^*Df}{\norm{H}_{L^q}}=|Df|^p=\frac{|H|^q}{\norm{H}_{L^q}^q},
\end{equation}
\item\label{i:two} for any $g\in\mathcal{W}^{1,p}(\Omega,\mathbb{R}^m)$ such that 
\begin{equation*}
\int_{\Omega}\mathrm{tr}\Big( (DfDf^*)^{\frac{p}2-1}\big( DfDg^*+DgDf^*\big)\Big) d\lambda\geq 0
\end{equation*}
there is $\int_{\Omega}\langle g,h\rangle d\lambda\geq 0$.
\end{enumerate}
\end{theorem}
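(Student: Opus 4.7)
The plan is to chain Theorem~\ref{thm:abso} with Proposition~\ref{pro:abso} and then reduce to a Dini derivative computation. By Theorem~\ref{thm:abso}, condition (i) is equivalent to $h\in\mathcal{R}_f$; by Proposition~\ref{pro:abso}, this holds if and only if $\int_{\Omega}\langle g,h\rangle\,d\lambda\geq 0$ for every $g\in\mathcal{W}^{1,p}(\Omega,\mathbb{R}^m)$ satisfying
\begin{equation*}
\liminf_{\epsilon\to 0^+}\frac{1-\norm{Df-\epsilon Dg}_{L^p}}{\epsilon}\geq 0.
\end{equation*}
It therefore suffices to prove that this Dini condition on $g$ coincides with the integral condition in~\ref{i:two}.

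I would set $F(\epsilon):=\norm{Df-\epsilon Dg}_{L^p}$, so $F(0)=1$, and compute $F'(0)$. Pointwise in $\Omega$, the convex function $A\mapsto\tfrac1p\mathrm{tr}(|A|^p)$ on $\mathbb{R}^{m\times n}$ is Fr\'echet differentiable for $p\in(1,\infty)$, with derivative $B\mapsto\mathrm{tr}(|A|^{p-2}A^*B)$, interpreted as zero on $\{A=0\}$. Differentiating $F^p=\int_{\Omega}\mathrm{tr}(|Df-\epsilon Dg|^p)\,d\lambda$ at $\epsilon=0$, with the interchange of derivative and integral justified by convexity of $F$ (which makes the difference quotients monotone in $\epsilon$) together with a Minkowski domination, yields
\begin{equation*}
F'(0)=-\int_{\Omega}\mathrm{tr}\bigl(|Df|^{p-2}Df^*Dg\bigr)\,d\lambda,
\end{equation*}
so that $\liminf_{\epsilon\to 0^+}(1-F(\epsilon))/\epsilon=-F'(0)$. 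Using the singular value decomposition identity $(Df^*Df)^{(p-2)/2}Df^*=Df^*(DfDf^*)^{(p-2)/2}$ together with symmetry of $(DfDf^*)^{p/2-1}$ and cyclicity of trace (which give $\mathrm{tr}(MDgDf^*)=\mathrm{tr}(MDfDg^*)$ for any symmetric $M$), this rewrites as
\begin{equation*}
-F'(0)=\tfrac12\int_{\Omega}\mathrm{tr}\bigl((DfDf^*)^{p/2-1}(DfDg^*+DgDf^*)\bigr)\,d\lambda,
\end{equation*}
which matches the integral condition in~\ref{i:two}, completing the reduction.

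The main obstacle I anticipate is the case $1<p<2$, where $(DfDf^*)^{p/2-1}$ is singular at points where $Df=0$. The resolution is to interpret the integrand as zero on $\{Df=0\}$; on this set the contribution to $F(\epsilon)^p-1$ is of order $\epsilon^p=o(\epsilon)$ since $p>1$, consistent with vanishing contribution to $F'(0)$, and the Fr\'echet differentiability of the Schatten $p$-norm on nonzero $L^p$-elements is unaffected. With this convention fixed, the derivative computation and the algebraic rewriting above go through unchanged.
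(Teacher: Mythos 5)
Your proof follows essentially the same route as the paper: reduce via Theorem~\ref{thm:abso} and Proposition~\ref{pro:abso} to identifying the one-sided derivative of $\epsilon\mapsto\norm{Df-\epsilon Dg}_{L^p}$ at $0$ with the integral expression in~\ref{i:two}, then differentiate pointwise under the integral and rewrite via the identity $(Df^*Df)^{s}Df^*=Df^*(DfDf^*)^{s}$. The only real divergence is the device used to justify the interchange of derivative and integral --- you rely on pointwise convexity (monotonicity of difference quotients) together with Fr\'echet differentiability of the $L^p$-norm, whereas the paper invokes a mean-value representation of the difference quotient, bounds it with matrix H\"older's inequality, and applies dominated convergence --- and both are sound.
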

\begin{proof}
Observe that Proposition \ref{pro:abso} yields that $\mathcal{R}_f$ consists of exactly these maps $h\in L^q(\Omega,\mathbb{R}^m)$ such that \ref{i:two} is satisfied.
Indeed, the limit
\begin{equation*}
\lim_{\epsilon\to 0}\frac{1-\norm{Df-\epsilon Dg}_{L^p}}{\epsilon}
\end{equation*}
exists and it is equal to, by the chain rule,
\begin{equation}\label{eqn:outcome}
\frac12 \int_{\Omega}\mathrm{tr}\Big( (DfDf^*)^{\frac{p}2-1}\big( DfDg^*+DgDf^*\big)\Big) d\lambda.
\end{equation}
For this, we need to compute the following limit
\begin{equation}\label{eqn:limit}
\lim_{\epsilon\to 0}\int_{\Omega}\mathrm{tr}\Big(\frac1{\epsilon}\Big((Df+\epsilon Dg)(Df+\epsilon Dg)^*\Big)^{\frac{p}2}-(DfDf^*)^{\frac{p}2}\Big) d\lambda.
\end{equation}
The integrand is a differential quotient, hence it is equal to the derivative at some point, that is 
\begin{equation*}
\frac{p}2\mathrm{tr}\bigg(\Big(( Df+\epsilon_0Dg)(Df+\epsilon_0Dg)^*\Big)^{\frac{p}2-1}\Big((Df+\epsilon_0Dg)Dg^*+Dg(Df+2\epsilon_0Dg)^*\Big)\bigg)
\end{equation*}
for some $\epsilon_0\in [0,\epsilon]$, which may depend on a point in $\Omega$. Let us denote by $A$ the sum $Df+\epsilon_0Dg$. Matrix H\"older's inequality and the cyclic property of trace tell us that the absolute value of this expression is bounded from above by
\begin{equation}\label{eqn:expression}
\frac{p}2\abs{Dg}_p\Big( \big|(AA^*)^{\frac{p}2-1}A\big|_q+\big|A^*(AA^*)^{\frac{p}2-1}\big|_q\Big)
\end{equation}
To compute the above observe that
\begin{equation*}
 \big|(AA^*)^{\frac{p}2-1}A\big|_q^q=\mathrm{tr}\Big(\big((AA^*)^{\frac{p}2-1}AA^*(AA^*)^{\frac{p}2-1}\big)^{\frac{q}2}\Big)=\mathrm{tr}\Big( (AA^*)^{\frac{(p-1)q}2}\Big)=\mathrm{tr} (AA^*)^{\frac{p}2}
\end{equation*}
since $(p-1)q=p$. Proceeding analogously with the second summand we get that (\ref{eqn:expression}) is equal to
\begin{equation}\label{eqn:integrand}
p\abs{Dg}_p \abs{Df+\epsilon_0Dg}_p^\frac{p}{q}.
\end{equation}
 By triangle inequality and assuming that $\epsilon\leq 1$, we see that (\ref{eqn:integrand}) may be bounded by
\begin{equation*}
p\abs{Dg}_p \big(\abs{Df}_p+\epsilon\abs{Dg}_p\big)^\frac{p}{q}\leq 2^{\frac{p}{q}}p\abs{Dg}_p\big(\abs{Df}_p^\frac{p}{q}+\abs{Dg}_p^\frac{p}{q}\big).
\end{equation*}
H\"older's inequality for integrals tells us that the function on the right hand-side of the above inequality is integrable. 
Therefore use of dominated convergence theorem is justified. It follows that the limit (\ref{eqn:limit}) is indeed equal to (\ref{eqn:outcome}), up to a factor $\frac{p}{2}$. Clearly, we have also proven that the integrand in (\ref{eqn:outcome}) is integrable.

Now, Theorem \ref{thm:abso} tells us that there exists $H\in L^2(\Omega,\mathbb{R}^{m\times n})$ with negative divergence equal to $h$ and such that (\ref{eqn:divve}) holds true.
The proof is complete.
\end{proof}

\begin{remark}
In view of Remark \ref{rem:possibleflux} all the results in the current section hold true also for functions in $\mathcal{W}^{1,p}_0(\Omega,\mathbb{R}^m)$, in lieu of $\mathcal{W}^{1,p}(\Omega,\mathbb{R}^m)$.
\end{remark}

\section*{Acknowledgements}
The author would like to express his thanks to the referees, whose remarks allowed for an improvement of the manuscript.
\bibliographystyle{siamplain}
\bibliography{refs}
\end{document}